\documentclass[a4paper, 12pt, reqno]{amsart}

\usepackage[normalem]{ulem} % introduces \sout command. The normalem option seems to be needed to preserve the italic character for \emph{}. Without, it becomes underlined
\usepackage{lmodern}
\usepackage[english]{babel}
\usepackage[latin1]{inputenc}
\usepackage{microtype}
\usepackage{amsfonts,amssymb,amsmath,amsthm,mathrsfs}
\usepackage{mathtools}
\usepackage{color, graphicx}
\usepackage[all]{xy}
\usepackage[bookmarks=false,colorlinks,urlcolor=cyan,citecolor=red,linkcolor=blue]{hyperref}  %,backref=page
\usepackage[inline]{enumitem}
\usepackage{cleveref}
\usepackage{a4wide}
\usepackage{upgreek}
\usepackage{etoolbox}
%\usepackage{fdsymbol}

% to adjust the indentation in the addresses
\makeatletter
\patchcmd{\@setaddresses}{\indent}{\noindent}{}{}
\patchcmd{\@setaddresses}{\indent}{\noindent}{}{}
\patchcmd{\@setaddresses}{\indent}{\noindent}{}{}
\patchcmd{\@setaddresses}{\indent}{\noindent}{}{}
\makeatother

\setcounter{tocdepth}{1}

%-----------------------------------------------------------------------------------------------------------------
% The tocdepth counter decides to which depth down the entries appear in the ToC.
% Usually, following values hold
% \setcounter{tocdepth}{-1} -- only parts
% \setcounter{tocdepth}{0} -- only parts and chapters
% \setcounter{tocdepth}{1} -- part,chapters,sections
% \setcounter{tocdepth}{2} -- part,chapters,sections, subsections
% \setcounter{tocdepth}{3} -- part,chapters,sections, subsections,subsubsections
% \setcounter{tocdepth}{4} -- part,chapters,sections, subsections,subsubsections and paragraphs
% \setcounter{tocdepth}{5} -- part,chapters,sections, subsections, subsubsections, paragraphs and subparagraphs.
%-----------------------------------------------------------------------------------------------------------------

%\input{PareigisGraphicalCalculus}

%%%%%%% lines to set the dimensions of indexes %%%%%%%%%%%%%%%%%%%%%%%%%%%%%%%%%%%%%%%%%%%%%%%%%%%%%%%%%
\makeatletter
\DeclareMathSizes{12}{12}{6}{5}
\makeatother

%The normal setting for the sizes are decided as follows:
%
%\DeclareMathSizes{5}{5}{5}{5}
%\DeclareMathSizes{6}{6}{5}{5}
%\DeclareMathSizes{7}{7}{5}{5}
%\DeclareMathSizes{8}{8}{6}{5}
%\DeclareMathSizes{9}{9}{6}{5}
%\DeclareMathSizes{\@xpt}{\@xpt}{7}{5}
%\DeclareMathSizes{\@xipt}{\@xipt}{8}{6}
%\DeclareMathSizes{\@xiipt}{\@xiipt}{8}{6}
%\DeclareMathSizes{\@xivpt}{\@xivpt}{\@xpt}{7}
%\DeclareMathSizes{\@xviipt}{\@xviipt}{\@xiipt}{\@xpt}
%\DeclareMathSizes{\@xxpt}{\@xxpt}{\@xivpt}{\@xiipt}
%\DeclareMathSizes{\@xxvpt}{\@xxvpt}{\@xxpt}{\@xviipt}
%This means that when the font size is 10, the normal math is in 10pt size,
%first level subscripts (and superscripts) 7pt and second level subscripts (and superscripts) 5pt.
%%%%%%%%%%%%%%%%%%%%%%%%%%%%%%%%%%%%%%%%%%%%%%%%%%%%%%%%%%%%%%%%%%%%%%%%%%%%%%%%%%

%%%%%%% lines to set the labels for the description environment %%%%%%%%%%%%%%%%%%%%%%%%%%%%%%%%%%%%%%%%%%%%%%%%%%%
% ``By default, items in a description can't be cross-referenced. LaTeX would just use the number of the section/chapter.
% The code in the preamble defines a new command namedlabel which produces the name when cross-referencing the item.
% In the example, the enumitem package is loaded for a correct alignment of multiline items''
% Usage:
% \begin{description}[style=multiline, labelwidth=1.5cm]
%    \item[\namedlabel{itm:rule1}{Rule 1}] Everything is easy with \LaTeX
%    \item[\namedlabel{itm:rule2}{Rule 2}] Sometimes it is not that easy\\
%        $\to$ \ref{itm:rule1} applies
% \end{description}
\makeatletter
\def\namedlabel#1#2{\begingroup
    #2%
    \def\@currentlabel{#2}%
    \phantomsection\label{#1}\endgroup
}
\makeatother
% see https://tex.stackexchange.com/questions/1230/reference-name-of-description-list-item-in-latex for more info.
% and https://texblog.org/2012/03/21/cross-referencing-list-items/
%%%%%%%%%%%%%%%%%%%%%%%%%%%%%%%%%%%%%%%%%%%%%%%%%%%%%%%%%%%%%%%%%%%%%%%%%%%%%%%%%%

\allowdisplaybreaks

%%%%%%%%%%%%%% invisible env. %%%%%%%%%%%%%%%%%%%%

%%%%%%%%%%%%%%%%%%%%%%%%%%%%%%%%%%%%%%%%%%%%%%%%%%

%%%%%%%%%%%%%% question env. %%%%%%%%%%%%%%%%%%%%

%%%%%%%%%%%%%%%%%%%%%%%%%%%%%%%%%%%%%%%%%%%%%%%%%%

\theoremstyle{plain}

% section version
\newtheorem{theorem}{Theorem}[section]
\newtheorem{lemma}[theorem]{Lemma}
\newtheorem{proposition}[theorem]{Proposition}
\newtheorem{corollary}[theorem]{Corollary}
\newtheorem*{theorem*}{Theorem}

\theoremstyle{definition}
\newtheorem{definition}[theorem]{Definition}
\newtheorem{example}[theorem]{Example}

\theoremstyle{remark}
\newtheorem{remark}[theorem]{Remark}

%%%%% Sets %%%%%%%%

\newcommand{\K}{\Bbbk}

%%%%%%% Functions %%%%%%%%%

%%%%% decorations %%%%%%
 % right quasi-Hopf bimodule
 % left quasi-Hopf bimodule
 % bimodule
 % left module
 % right module

 % equivalence class
 % coinvariant elements

\newcommand{\op}[1]{#1^{o}} % opposite
\newcommand{\env}[1]{{#1}^{e}} % enveloping
\newcommand{\mf}[1]{\mathfrak{#1}} % mathfrak
 % math bold symbol

 % associative finite dual
 % non-associative finite dual
 % ordinary right dual
 % associative left finite dual
 % ordinary left dual
\newcommand{\tensor}[1]{\otimes_{{#1}}} % tensor product over a ring
 % wide hat
 % complete tensor product

 % group inverse
 % pullback for xy-pic
 % pushout for xy-pic

%%%%% categorical objects %%%%%%
 % categorical colimit
 % categorical limit
 % projective limit
 % injective limit
 % coequalizer
 % previously it was \cU
 % functorial coaction
 % natural transformations
 % dinatural transformations
 % coend
 % kernel of a morphism

 % cokernel of a morphism
\newcommand{\id}{\mathsf{Id}} % identity morphism
 % image of a morphism
 % coimage of a morphism
 % image with explicit morphism

%%%%%% particular hom-sets %%%%%%%%
\newcommand{\Hom}[6]{{_{#1}^{#2}\mathsf{Hom}_{#3}^{#4}}\left({#5},{#6}\right)} % \Hom{[dl]}{[ul]}{[dr]}{[ur]}{[from]}{[to]}
 % k-linear morphism
 % filtered homomorphisms
 % complete homomorphisms
\newcommand{\End}[2]{\mathsf{End}_{#1}(#2)} % endomorphisms
 % automorphisms
\newcommand{\Derk}[1]{\mathsf{Der}_{\K}\left({#1}\right)} % k-linear derivations
\newcommand{\Der}{\mathsf{Der}}

%%%% Categories %%%%
 % right quasi-Hopf bimodules
 % left quasi-Hopf bimodules

\newcommand{\alg}{\mathsf{Alg}}
\newcommand{\algk}{\alg_{\K}} % algebras
 % commutative algebras
 % filtered commutative algebras
 % complete commutative algebras
 % coalgebras
 % bialgebras
\newcommand{\liek}{\mathsf{Lie}_{\K}} % Lie algebras
 % Lie coalgebras
 % non-associative algebras
 % non-coassociative coalgebras
 % quasi-bialgebras
 % coquasi-bialgebras (equiv. dual quasi-bialgebras)
 % splitting coquasi-bialgebras (equiv. splitting dual quasi-bialgebras)
 % non-associative algebras in the category of coalgebras
 % non-coassociative coalgebras in the category of algebras
\newcommand{\vectk}{\mathsf{Vect}_{\K}} % vector spaces explicit
 % finite-dimensional vector spaces or finitely generated k-modules
 % category of (small) sets
 % category of (small) groups
 % category of topological spaces
 % category of topological spaces
 % abelian groups
 % bicategory of bimodules and algebras
 % bicategory of bicomodules and coalgebras
\newcommand{\Bim}[2]{{}_{#1}\M{}_{#2}} % bimodules
 % filtered bimodules
 % filtered bimodules
 % complete bimodules
 % right modules
 % left modules
 % filtered (bi)modules over k
 % complete (bi)modules over k
 % filtered left modules
  % filtered right modules
\newcommand{\Bimod}[1]{\Bim{#1}{#1}} % bimodules over the same ring
 % Commutative Hopf algebroids
 % Complete commutative Hopf algebroids
 % finitely generated and projective objects
 % geometric partial comodules
 % lax partial comodules
 % quasi partial comodules
 % comodules
 % non-coassociative comodules
 % partial comodule data
 % partial representations
 % partial representations
 % algebraic partial comodules
 % global coverings
\newcommand{\ALie}[1]{\mathsf{AnchLie}_{#1}} % anchored Lie algebras
 % Lie-Rinehart pairs
 % bialgebroids
\newcommand{\Ring}{\mathsf{Ring}} % rings
\newcommand{\LieRin}{\mathsf{LieRin}} % Lie-Rinehart algebras

%%%% Functors %%%%

%%%% Mathcal %%%%%%
\newcommand{\cA}{{\mathcal A}}

\newcommand{\cC}{{\mathcal C}}

\newcommand{\cE}{{\mathcal E}}
\newcommand{\cF}{{\mathcal F}}

\newcommand{\cL}{{\mathcal L}}
\newcommand{\cM}{{\mathcal M}}

\newcommand{\cU}{{\mathcal U}}

%%%%%%%% Mathbb %%%%%%%%

%%%%%%% Mathscr %%%%%%%

\newcommand{\sL}{\mathscr{L}}

\newcommand{\sU}{\mathscr{U}}

%%%%% monoidal categories %%%%%%%
\newcommand{\M}{\mathsf{Mod}} % Monoidal category of k-modules
 % unit
 % other unit
\newcommand{\calpha}{\mf{a}}%{\upalpha} % associativity constraint
\newcommand{\clambda}{\mf{l}}%{\uplambda} % left unit constraint
\newcommand{\crho}{\mf{r}}%{\uprho} % right unit constraint
 % braiding
 % evaluation
 % dual bssis

%%%%% comandi generici %%%%%%

 % characteristic
 % annihilator
 % analytical limit
 % bold circle
 % graded associated

\newcommand{\eg}{e.g.~}

 % the canonical morphism for Hopf algebroids
 % the multiplication of an algebra
 % Hopf algebroid

%%%% Laiachi %%%%

%%%% Joost %%%%

\newbox\pullbackbox
\setbox\pullbackbox=\hbox{\xy 0;<1mm,0mm>: \POS(2,0)\ar@{-} (0,2) \ar@{-} (4,2)
\endxy}

\newbox\pushoutbox
\setbox\pushoutbox=\hbox{\xy 0;<1mm,0mm>: \POS(2,2)\ar@{-} (0,0) \ar@{-} (4,0)
\endxy}

%%%%%%%% comfort local commands %%%%%%%%%%
 % white left action
 % white right action
 % black left action
 % black right action

%%%%%% review commands %%%%%%

\definecolor{bostonuniversityred}{rgb}{0.8, 0.0, 0.0}

% change the appearence of the number for the footnotes. In this case, an arabic number with parenthesis around
%\CompileMatrices
% creates a separate tempoaray file with a precompiled version of the diagram that is uploaded quickly.
% The following two options don't seem to work properly..
%\OnlyOutlines
% just leave the space of the diagram, without loading it
%\ShowOutlines
% adds a dotted rectangle around the empty space occupied by the diagram
%\UseCrayolaColors
% upload color names

%%%%%%%%%%%%%%%%%%%%%%%%%%%%%%%%%%%%%%%%%%%%%%%%%%%%%%%%%%%%%%%%%%%%%%%%%%%%%%%%%%%

%\excludeversion{invisible} %InsertHereEnvironmentYouDontWantToShow
%\excludeversion{personal}
%\excludeversion{rmv}

% Algebraic and categorical properties of the Connes-Moscovici's bialgebroid
\title[UEAs of LRAs as left adjoint]{Universal enveloping algebras of Lie-Rinehart algebras as a left adjoint functor}

\author{Paolo Saracco}
\address{D\'epartement de Math\'ematique, Universit\'e Libre de Bruxelles, Boulevard du Triomphe, B-1050 Brussels, Belgium.}
\urladdr{sites.google.com/view/paolo-saracco}
\urladdr{homepages.ulb.ac.be/~psaracco}
\email{paolo.saracco@ulb.be}

%\thanks{Paolo Saracco is a collaborateur scientifique of the Fonds de la Recherche Scientifique - FNRS and a member of the ``National Group for Algebraic and Geometric Structures and their Applications'' (GNSAGA-INdAM)}
\thanks{Paolo Saracco is a Charg\'e de Recherches of the Fonds de la Recherche Scientifique - FNRS and a member of the ``National Group for Algebraic and Geometric Structures and their Applications'' (GNSAGA-INdAM)}
\keywords{Lie-Rinehart algebras; Anchored Lie algebras; universal enveloping algebras; universal properties; adjoint functors; Connes-Moscovici's bialgebroid; Atiyah algebra}
\subjclass[2020]{Primary: 16B50; 16S10; 16S30; 16W25; 18A40. Secondary: 17A30; 17B66}
% 16B50 - Category-theoretic methods and results in associative algebras
% 16S10 - Associative rings determined by universal properties (free algebras, coproducts, adjunction of inverses, etc.)
% 16S30 - Universal enveloping algebras of Lie algebras
% 16W25 - Derivations, actions of Lie algebras
% 17A30 - Nonassociative algebras satisfying other identities
% 17B66 - Lie algebras of vector fields and related (super) algebras
% 18A40 - Adjoint functors (universal constructions, reflective subcategories, Kan extensions, etc.)

\begin{document}

\begin{abstract}
We prove how the universal enveloping algebra constructions for Lie-Rinehart algebras and anchored Lie algebras are naturally left adjoint functors. This provides a conceptual motivation for the universal properties these constructions satisfy. As a supplement, the categorical approach offers new insights into the definitions of Lie-Rinehart algebra morphisms, of modules over Lie-Rinehart algebras and of the infinitesimal gauge algebra of a module.
\end{abstract}

\maketitle

\tableofcontents

% --------------------------------------------------------------------------------------------------------------------------------- %
% INTRODUCTION
% --------------------------------------------------------------------------------------------------------------------------------- %

\section*{Introduction}

It is well-known that any associative and unital algebra $A$ over a field $\K$ admits a natural Lie algebra structure with bracket given by the commutator bracket. Usually, one writes $\cL(A)$ for the Lie algebra associated with $A$ and the assignment $A \mapsto \cL(A)$ turns out to be functorial from the category of $\K$-algebras $\algk$ to the category of Lie algebras $\liek$. In the opposite direction, one defines the \emph{universal enveloping $\K$-algebra} of a given a Lie algebra $L$ over $\K$ to be an associative and unital algebra $U(L)$ together with a natural morphism of Lie algebras $L \to \cL(U(L))$ which is universal from $L$ to the functor $\cL$ (see, for instance, \cite[\S2.1]{Bourbaki}). Equivalently, the universal enveloping algebra construction provides a left adjoint $U:\liek \to \algk$ to the associated Lie algebra functor $\cL$.

Among Lie algebras, a family that particularly attracted the attention of the community is that of Lie-Rinehart algebras. Informally, a Lie-Rinehart algebra over a commutative algebra $R$ (also called $(\K,R)$-Lie algebra) is a Lie algebra $L$ together with an additional structure that reflects the interaction between the algebra of smooth functions on a smooth manifold $\cM$ and the Lie algebra of smooth vector fields on $\cM$. Rinehart himself gave an explicit construction of what he called the universal enveloping algebra $U(R, L)$ of a Lie-Rinehart algebra in \cite{Rinehart} and proved a Poincar\'e-Birkhoff-Witt theorem for the latter. Other equivalent constructions are provided in \cite[\S3.2]{LaiachiPaolo-complete}, \cite[page 64]{Huebschmann-Poisson}, \cite[\S18]{Sweedler-groups}. The universal property of $U(R, L)$ as an algebra is spelled out in \cite[page 64]{Huebschmann-Poisson} and \cite[page 174]{Malliavin} (where it is attributed to Feld'man). Its universal property as an $R$-bialgebroid is codified in the Cartier-Milnor-Moore Theorem for $U(R, L)$ proved in \cite[\S3]{MoerdijkLie}, where it is shown that the construction of the universal enveloping algebra provides a left adjoint to the functor sending any cocommutative bialgebroid to its Lie-Rinehart algebra of primitive elements. Further algebraic and categorical properties and applications are investigated in \cite{ArdiLaiachiPaolo,LaiachiPaolo-diff,Huebschmann-quantization,Huebschmann-LR,Huebschmann-Poisson}.

However, as far as the author is aware, nowhere it is shown that the construction of the so-called ``universal enveloping algebra'' provides a left adjoint functor. In fact, the universal property of $U(R,L)$ mentioned above involves at the same time a morphism of $\K$-algebras $R \to U(R,L)$ and a morphism of Lie algebras $L \to \cL\big(U(R,L)\big)$ which need to be compatible in a suitable way and there seems to be no natural functor from the category of $\K$-algebras (or that of $R$-rings) to the category of Lie-Rinehart algebras over $R$ which may play the role of the right adjoint functor.

Our aim is to make up for this lack by explicitly providing such a right adjoint. As a consequence, our results provide a conceptual motivation for addressing $U(R,L)$ as the universal enveloping $R$-ring of $L$. Notice also that being able to identify the universal enveloping algebra functor as a left adjoint offers a number of additional informations from the categorical point of view. For instance, that it preserves all colimits that exists in the category of Lie-Rinehart algebras over $R$. Furthermore, we will comment on how the categorical approach and the constructions we introduce allow us to re-interpret and explain the notions of modules over a Lie-Rinehart algebra $L$, of morphisms of Lie-Rinehart algebras over different bases and of the infinitesimal gauge algebra of an $A$-module $M$ with respect to a Lie-Rinehart algebra $L$, as introduced in \cite{Huebschmann-Poisson}. 

Even if our main objective concerns Lie-Rinehart algebras and their universal enveloping algebras, we will initially work in the more general framework of anchored Lie algebras as introduced in \cite[\S2]{Saracco-anch}. Our motivation for this choice is twofold. Firstly, the non-commutative setting of anchored Lie algebras is, at the same time, general enough to become easier to be handle and close enough to our target to provide most of the results we need in order to deal with the commutative setting. Secondly, anchored Lie algebras subsume several important examples of Lie algebras acting by derivations on associative algebras which are not necessarily commutative (any Lie algebra acts by derivations on its universal enveloping algebra and any associative algebra acts by inner derivations on itself) and they proved to be of key importance in the study of the structure of primitively generated bialgebroids over a non-commutative base (see \cite[\S4]{Saracco-anch}). Therefore, they deserve to be studied more closely. In addition, it is noteworthy that our approach via anchored Lie algebras allows also to re-interpret the algebra of differential operators of a representation of a Lie algebra, as introduced by Jacobson in \cite[page 175]{Jacobson}, and to provide a conceptual interpretation for its universal property (see discussion after Theorem \ref{thm:adj1}).

Concretely, after an introductory Section \ref{sec:prelim}, where we collect the basics on anchored Lie algebras and Lie-Rinehart algebras that we need to keep the presentation self-contained, in Section \ref{sec:UEAeRing} we explicitly construct a functor $\sL_A:\Ring_{\env{A}} \to \ALie{A}$ which we prove to be the natural right adjoint to the universal enveloping $\env{A}$-ring functor $\sU_A:\ALie{A} \to \Ring_{\env{A}}$ provided by the Connes-Moscovici's bialgebroid construction in \cite[\S2.2]{Saracco-anch} (see Theorem \ref{thm:main}). Section \ref{sec:LRalg} is devoted to adapt the results of Section \ref{sec:UEAeRing} to the commutative framework of Lie-Rinehart algebras and so to construct a right adjoint $\cL_A:\Ring_A \to \LieRin_A$ to the universal enveloping algebra functor $\cU_A:\LieRin_A \to \Ring_A$ (see Theorem \ref{thm:main2}). We conclude with a brief reflection in Section \ref{sec:applications} concerning the definitions of morphisms of Lie-Rinehart algebras over different bases, of modules over a Lie-Rinehart algebra and of the Lie-Rinehart algebra of infinitesimal gauge transformations of an $A$-module.

\subsection*{Conventions}

All over the paper, we assume a certain familiarity of the reader with the language of monoidal categories and of (co)monoids therein (see, for example, \cite[VII]{MacLane}).

We work over a ground field $\K$ of characteristic $0$. All vector spaces are assumed to be over $\K$. The unadorned tensor product $\otimes$ stands for $\tensor{\K}$. All (co)algebras and bialgebras are intended to be  $\K$-(co)algebras and $\K$-bialgebras, that is to say, (co)algebras and bialgebras in the symmetric monoidal category of vector spaces $\left(\vectk,\otimes ,\K\right) $. Every (co)module has an underlying vector space structure. Identity morphisms $\id_V$ are often denoted simply by $V$.

If $f : U \to V$ is a $\K$-linear map, we denote by $f^*$ the morphism $\Hom{}{}{\K}{}{V}{W} \to \Hom{}{}{\K}{}{U}{W}, g \mapsto g \circ f,$ and by $f_*$ the morphism $\Hom{}{}{\K}{}{W}{U} \to \Hom{}{}{\K}{}{W}{V}, g \mapsto f \circ g$.

Unless stated otherwise, $A$ denotes a not necessarily commutative algebra over $\K$ and $\op{A}$ denotes its opposite algebra. When looking at $a \in A$ as an element in $\op{A}$, we will denote it by $\op{a}$.
The Lie algebra associated with $A$, which by abuse of notation we denote by $A$ again, is the Lie algebra which has as underlying $\K$-vector space $A$ itself and bracket given by the commutator $[a,b] = ab-ba$ for all $a,b \in A$.

If $C$ is a $\K$-coalgebra, we take advantage of the Heyneman-Sweedler notation $\Delta(c) = \sum c_1 \otimes c_2$ to deal explicitly with the comultiplication.

Finally, we will make use of the fact that algebraic forgetful functors create limits in the sense of \cite[V.1, Definition]{MacLane}. In particular, the pullback of a diagram of vector spaces (or modules) can be computed by performing the corresponding pullback in the category of sets and by endowing it with the unique structure that makes of it a vector space (or module).

% --------------------------------------------------------------------------------------------------------------------------------- %
% PRELIMINARIES
% --------------------------------------------------------------------------------------------------------------------------------- %

\section{Preliminaries}\label{sec:prelim}

We collect some facts about bimodules, rings, Lie-Rinehart algebras and anchored Lie algebras that will be needed in the sequel. The aim is to keep the exposition self-contained. 

\subsection{$A$-bimodules and $A$-rings} Given a $\K$-algebra $A$, the category of $A$-bimodules forms a non-strict monoidal category $\left(\Bimod{A},\tensor{A},A,\calpha,\clambda,\crho\right)$. 
Nevertheless, all over the paper we will behave as if the structural natural isomorphisms
\begin{gather*}
\calpha_{M,N,P}:\left(M\tensor{A}N\right)\tensor{A}P \to M\tensor{A}\left(N\tensor{A}P\right), \quad \left(m\tensor{A}n\right)\tensor{A}p \to m\tensor{A}\left(n\tensor{A}p\right), \\
\clambda_M: A \tensor{A} M \to M, \quad a\tensor{A}m \mapsto a\cdot m, \qquad \text{and} \qquad  \crho_M:M\tensor{A}A \to M, \quad m \tensor{A} a \mapsto m \cdot a,
\end{gather*}
were ``the identities'', that is, as if $\Bimod{A}$ was a strict monoidal category. 

An $A$\emph{-ring} is a monoid in $(\Bimod{A},\tensor{A},A)$. Equivalently, it is a $\K$-algebra $R$ together with a morphism of $\K$-algebras $\phi:A \to R$. In what follows we will often write that $(R,\phi)$ is an $A$-ring when we need to stress the role played by $\phi$. A \emph{morphism of $A$-rings} from $(R,\phi)$ to $(S,\psi)$ is a $\K$-algebra morphism $\varphi:R \to S$ such that $\varphi \circ \phi = \psi$. The category of $A$-rings and their morphisms will be denoted by $\Ring_A$.

\subsection{Lie-Rinehart algebras}\label{ssec:LRalg} A \emph{Lie-Rinehart algebra} over a commutative $\K$-algebra $A$ (called in this way in honour of Rinehart, who studied them in \cite{Rinehart} under the name of $(K,R)$-Lie algebras) is a Lie algebra $L$ endowed with a (left) $A$-module structure $A\otimes L\to L, a \otimes X \mapsto a \cdot X,$ and with a Lie algebra morphism $\omega:L\to \Derk{A} $ such that
\begin{equation}\label{eq:Leibniz}
\omega \left( a\cdot X\right) = a\cdot \omega \left( X\right) \qquad \text{and}\qquad \left[ X,a\cdot Y\right] = a \cdot \left[ X,Y\right] +\omega \left(X\right) \left( a\right) \cdot Y
\end{equation}
for all $a \in A$ and $X,Y\in L$. A \emph{morphism of Lie-Rinehart algebras} over $A$ from $\left( L,\omega \right) $ to $\left( L^{\prime },\omega ^{\prime }\right) $ is a Lie algebra morphism $f:L\to L^{\prime }$ which is also left $A$-linear and such that $\omega^{\prime }\circ f=\omega $. With these morphisms, Lie-Rinehart algebras over $A$ form a category that we denote by $\LieRin_A$. For the sake of brevity, we may simply write that $(A,L,\omega)$ is a Lie-Rinehart algebra to mean that $(L,\omega)$ is a Lie-Rinehart algebra over $A$. When $A$ and $\omega$ are clear from the context, we may simply write $L$ instead of $(A,L,\omega)$. For instance, we may often write that $f: L \to L'$ is a morphism of Lie-Rinehart algebras over $A$ to mean that $f$ is a morphism in $\LieRin_A$ as above.

\begin{example}
The smooth global sections of a Lie algebroid over a real smooth manifold $\cM$ naturally form a Lie-Rinehart algebra over $\cC(\cM)$. In particular, the smooth vector fields on $\cM$ give rise to the Lie-Rinehart algebra $\big(\Derk{\cC(\cM)},\id\big)$.
\end{example}

Given a Lie-Rinehart algebra $(A,L,\omega)$, a \emph{universal enveloping algebra} of $L$ is an $A$-ring $\big(\cU_A(L),\iota_A\big)$ together with a morphism of Lie algebras $\iota_L:L \to \cU_A(L)$ such that
\begin{equation}\label{eq:UEAm}
\iota_L(a\cdot X) = \iota_A(a)\iota_L(X) \qquad \text{and} \qquad \big[\iota_L(X),\iota_A(a)\big] = \iota_A\big(\omega(X)(a)\big)
\end{equation}
for all $a \in A$, $X \in L$, and which is universal with respect to this property. That is to say, for any $A$-ring $(R,\phi_A)$ together with a Lie algebra morphism $\phi_L:L \to R$ such that 
\begin{equation}\label{eq:UEA}
\phi_L(a\cdot X) = \phi_A(a)\phi_L(X) \qquad \text{and} \qquad \big[\phi_L(X),\phi_A(a)\big] = \phi_A\big(\omega(X)(a)\big)\
\end{equation}
for all $a \in A$, $X \in L$, there exists a unique morphism of $A$-rings $\Phi:\cU_A(L) \to R$ such that $\Phi \circ \iota_L = \phi_L$. It follows that the universal enveloping algebra construction induces a functor $\cU_A:\LieRin_A \to \Ring_A$.

\subsection{Anchored Lie algebras}\label{ssec:Aanch} Recall from \cite[\S2.1]{Saracco-anch} that an \emph{anchored Lie algebra} over a non-commutative $\K$-algebra $A$ (also called $A$-anchored Lie algebra) is a Lie algebra $L$ over $\K$ together with a Lie algebra morphism $\omega: L \to \Der_\K(A)$, called the \emph{anchor}. A \emph{morphism of $A$-anchored Lie algebras} from $\left( L,\omega \right) $ to $\left( L^{\prime },\omega ^{\prime }\right) $ is a Lie algebra morphism $f:L\to L^{\prime }$ such that $\omega^{\prime }\circ f=\omega $. The category of $A$-anchored Lie algebras and their morphisms will be denoted by $\ALie{A}$. As a matter of notation, we may write $(A,L,\omega)$ to mean the $A$-anchored Lie algebra $(L,\omega)$. Again, if $A$ and $\omega$ are clear from the context, we may simply write $L$ instead of $(A,L,\omega)$.

\begin{example}\label{ex:Ranch}
The associated Lie algebra of a $\K$-algebra $A$ becomes an $A$-anchored Lie algebra with anchor $\varpi_A : A \to \Der_\K(A), a \mapsto [a,-]$.
%\[\varpi_A : A \to \Der_\K(A), \qquad a \mapsto [a,-].\]
\end{example}

Given an $A$-anchored Lie algebra $(L,\omega)$, we have a construction for a \emph{universal enveloping algebra} of $L$. This is the $\env{A}$-ring $A \odot U(L) \odot A$ obtained by the Connes-Moscovici's bialgebroid construction in \cite[\S2.2]{Saracco-anch}. Explicitly, it is the $\K$-algebra with underlying vector space $A \otimes U(L) \otimes A$, unit $1_A \otimes 1_U \otimes 1_A$, multiplication uniquely determined by
\[(a \otimes u \otimes b)(a'\otimes v \otimes b') = \sum a(u_1 \cdot a') \otimes u_2v \otimes (u_3 \cdot b')b\]
and $\env{A}$-ring structure given by $J_A:\env{A} \to A \odot U(L) \odot A, a \otimes \op{b} \mapsto a \otimes 1_U \otimes b$.
It comes with a morphism of Lie algebras $J_L:L \to A \odot U(L) \odot A, X \mapsto 1_A \otimes X \otimes 1_A,$ such that
\[%\begin{equation}\label{eq:UEAdm}
\big[J_L(X),J_A(a \otimes \op{b})\big] = J_A\big(\omega(X)(a) \otimes \op{b} + a \otimes \op{\omega(X)(b)}\big)
\]%\end{equation}
for all $a,b \in A$, $X \in L$, and which is universal with respect to this property. That is to say, for any $\env{A}$-ring $(R,\phi_A)$ together with a Lie algebra morphism $\phi_L:L \to R$ such that 
\[%\begin{equation}\label{eq:UEAd}
\big[\phi_L(X),\phi_A(a \otimes \op{b})\big] = \phi_A\big(\omega(X)(a) \otimes \op{b} + a \otimes \op{\omega(X)(b)}\big)\
\]%\end{equation}
for all $a,b \in A$, $X \in L$, there exists a unique morphism of $\env{A}$-rings $\Phi:A \odot U(L) \odot A \to R$ such that $\Phi \circ J_L = \phi_L$. It follows that this construction induces a functor
\begin{equation}\label{eq:UEAdF}
\ALie{A} \to \Ring_{\env{A}}, \qquad (L,\omega) \mapsto \big(A \odot U(L) \odot A,J_A\big).
\end{equation}
One of our main aims is to show that the universal property characterizing $A \odot U(L) \odot A$ is expressing the fact that the functor \eqref{eq:UEAdF} is a left adjoint functor. As a consequence, we will refer to $A \odot U(L) \odot A$ as the universal enveloping $\env{A}$-ring of $(A,L,\omega)$.

The following results clarify the relationship between $A$-anchored Lie algebras and $\env{A}$-anchored Lie algebras that will be needed in the sequel.

%\begin{theorem}
%There is an isomorphism of categories
%\[\ALie{R} \cong \ALie{\env{R}}\]
%given by sending every $(L,\omega)$ to $(L,\omega_\otimes)$, where
%\[\omega_\otimes:L \to \Derk{\env{R}}, \qquad r \otimes \op{s} \mapsto \omega(X)(r) \otimes \op{s} + r \otimes \op{\omega(X)(s)}.\]
%\end{theorem}
%
%\begin{proof}
%The fact that the assignment $(L,\omega) \mapsto (L,\omega_\otimes)$ is well defined is a consequence of the fact that the category of representations of $L$ is a monoidal category. On morphisms, it acts as the identity and this defines a functor $\ALie{R} \to \ALie{\env{R}}$. In the opposite direction, assume that we have a morphism of Lie algebras $\tilde{\omega}:L \to \Derk{\env{R}}$ and define
%\[\omega(X) \coloneqq \left(R \xrightarrow{\lambda} R \otimes \op{R} \xrightarrow{ \tilde{\omega}(X) } R \otimes \op{R} \xrightarrow{\mu_R} R\right)\]
%for all $X \in L$. For all $r,s \in R$ we have that
%\begin{align*}
%\omega(X)(rs) & = \mu_R\big(\tilde{\omega}(X)(rs \otimes \op{1})\big) = \mu_R\big(\tilde{\omega}(X)(r \otimes \op{1})\cdot (s \otimes \op{1}) + (r \otimes \op{1})\cdot \tilde{\omega}(X)(s \otimes \op{1})\big) \\
 %& = 
%\end{align*}
%\end{proof}

\begin{lemma}\label{lem:ders}
Any $\delta \in \Derk{A}$ induces a derivation $\delta_\otimes \in \Derk{\env{A}}$ uniquely determined by 
$ %\begin{equation}\label{eq:deltatens}
\delta_\otimes\big(a \otimes \op{b}\big) = \delta(a) \otimes \op{b} + a \otimes \op{\delta(b)}
$ %\end{equation}
for all $a,b\in A$. This induces a Lie algebra morphism $e:\Derk{A} \to \Derk{\env{A}},\delta\mapsto \delta_\otimes$.
\end{lemma}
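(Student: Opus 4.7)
The plan is to construct $\delta_\otimes$ as a tensor product of two derivations, one on each factor of $\env{A} = A \otimes \op{A}$, and then verify the stated uniqueness and the Lie-bracket compatibility.

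First I would check that every $\K$-derivation $\delta$ of $A$ also defines a $\K$-derivation of $\op{A}$. Since the multiplication on $\op{A}$ is given by $\op{a}\cdot\op{b} = \op{(ba)}$, the Leibniz identity $\delta(ba) = \delta(b)a + b\delta(a)$ valid in $A$ reads, under $\op{(-)}$, as $\delta(\op{a}\cdot\op{b}) = \delta(\op{a})\cdot\op{b} + \op{a}\cdot\delta(\op{b})$. Call this derivation of $\op{A}$ by $\op{\delta}$. Then I set
\[ \delta_\otimes \coloneqq \delta \otimes \id_{\op{A}} + \id_A \otimes \op{\delta}, \]
which is a $\K$-linear endomorphism of $A \otimes \op{A} = \env{A}$ whose value on a simple tensor $a \otimes \op{b}$ is precisely $\delta(a) \otimes \op{b} + a \otimes \op{\delta(b)}$. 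Since simple tensors span $\env{A}$ over $\K$, this formula determines $\delta_\otimes$ uniquely.

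Next I would verify that $\delta_\otimes$ is a derivation of $\env{A}$. This is the standard fact that the tensor product of two derivations (one on each tensor factor of a tensor product of algebras) is itself a derivation. Concretely, computing $\delta_\otimes\big((a \otimes \op{b})(a' \otimes \op{b'})\big) = \delta_\otimes\big(aa' \otimes \op{(b'b)}\big)$ and comparing with $\delta_\otimes(a \otimes \op{b})(a' \otimes \op{b'}) + (a \otimes \op{b})\delta_\otimes(a' \otimes \op{b'})$ leaves an equality which reduces immediately to the Leibniz rules for $\delta$ on $A$ and on $\op{A}$.

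Finally, for $e$ to be a Lie algebra morphism, $\K$-linearity is visible from the explicit formula, and it remains to prove $e\big([\delta,\delta']\big) = [e(\delta),e(\delta')]$. Evaluating both sides on a generator $a \otimes \op{b}$, expansion of $\delta_\otimes \delta'_\otimes - \delta'_\otimes \delta_\otimes$ produces mixed terms $\delta(a) \otimes \op{\delta'(b)}$ and $\delta'(a) \otimes \op{\delta(b)}$ with opposite signs, which cancel, and the surviving contributions assemble into $[\delta,\delta'](a) \otimes \op{b} + a \otimes \op{[\delta,\delta'](b)} = [\delta,\delta']_\otimes(a \otimes \op{b})$, as desired.

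There is no genuine obstacle in this argument; the only mild subtlety is being careful with the order of multiplication in $\op{A}$ when checking that $\delta$ remains a derivation there, so that the two ``halves'' $\delta \otimes \id$ and $\id \otimes \op{\delta}$ of $\delta_\otimes$ are bona fide derivations on their respective factors before one invokes the general tensor-product principle.
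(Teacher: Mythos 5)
Your argument is correct and complete: it is exactly the standard verification that the paper dismisses with the single word ``Straightforward,'' namely realizing $\delta_\otimes$ as $\delta\otimes\id_{\op{A}}+\id_A\otimes\op{\delta}$, checking the Leibniz rule against the multiplication $(a\otimes\op{b})(a'\otimes\op{b'})=aa'\otimes\op{(b'b)}$, and noting the cancellation of mixed terms in $[\delta_\otimes,\delta'_\otimes]$. Your care with the order of multiplication in $\op{A}$ is exactly the right (and only) point of attention.
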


\begin{proof}
Straightforward.
%Since
%\begin{align*}
%\delta_\otimes& (aa' \otimes \op{(b'b)}) = \delta(aa') \otimes \op{(b'b)} + aa' \otimes \op{\delta(b'b)} \\
 %& = \delta(a)a' \otimes \op{(b'b)} + a\delta(a') \otimes \op{(b'b)} + aa' \otimes \op{\delta(b')b} + aa' \otimes \op{b'\delta(b)} \\
 %& = \delta_\otimes(a \otimes \op{b})(a'\otimes \op{b'}) + (a \otimes \op{b})\delta_\otimes(a'\otimes \op{b'})
%\end{align*}
%for all $a,b \in A$, we have that $\delta_\otimes$ is a well-defined derivation. Since
%\begin{align*}
%& \big(e(\delta)e(\delta')-e(\delta')e(\delta)\big)(a \otimes \op{b}) \\
 %& = e(\delta)\big(\delta'(a) \otimes \op{b} + a \otimes \op{\delta'(b)}\big) - e(\delta')\big(\delta(a) \otimes \op{b} + a \otimes \op{\delta(b)}\big) \\
 %& = \delta\delta'(a) \otimes \op{b} + a \otimes \op{\delta\delta'(b)} - \delta'\delta(a) \otimes \op{b} - a \otimes \op{\delta'\delta(b)} \\
 %& = e([\delta,\delta'])(a \otimes \op{b}),
%\end{align*}
%we also have that $e$ is of Lie algebras.
\end{proof}

\begin{proposition}\label{prop:E}
Any $A$-anchored Lie algebra $(L,\omega)$ is an $\env{A}$-anchored Lie algebra via
\[\omega_\otimes:L \to \Derk{\env{A}}, \qquad X \mapsto \omega(X)_\otimes.\]
The assignment $(L,\omega) \mapsto (L,\omega_\otimes)$ provides a functor $\cE_A:\ALie{A} \to \ALie{\env{A}}$.
\end{proposition}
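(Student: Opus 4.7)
The proof is essentially a formal consequence of Lemma \ref{lem:ders}, so I would keep it short.

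First, I would observe that by construction $\omega_\otimes = e \circ \omega$, where $e : \Derk{A} \to \Derk{\env{A}}$ is the Lie algebra morphism provided by Lemma \ref{lem:ders}. Since $\omega : L \to \Derk{A}$ is a Lie algebra morphism by hypothesis (as $(L,\omega)$ is an $A$-anchored Lie algebra) and $e$ is a Lie algebra morphism by the same lemma, the composition $\omega_\otimes$ is a Lie algebra morphism $L \to \Derk{\env{A}}$. This exhibits $(L,\omega_\otimes)$ as an $\env{A}$-anchored Lie algebra in the sense of Subsection \ref{ssec:Aanch}, establishing the first assertion.

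For the functoriality statement, let $f : (L,\omega) \to (L',\omega')$ be a morphism in $\ALie{A}$. Since $f$ is already a Lie algebra morphism, the only non-trivial point to check is the anchor compatibility with respect to the new anchors $\omega_\otimes$ and $\omega'_\otimes$, namely $\omega'_\otimes \circ f = \omega_\otimes$. Post-composing the identity $\omega' \circ f = \omega$ (which holds because $f$ is a morphism in $\ALie{A}$) with $e$ yields
\[
\omega'_\otimes \circ f \,=\, e \circ \omega' \circ f \,=\, e \circ \omega \,=\, \omega_\otimes,
\]
so $f$ is a morphism $(L,\omega_\otimes) \to (L',\omega'_\otimes)$ in $\ALie{\env{A}}$. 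Setting $\cE_A(L,\omega) := (L,\omega_\otimes)$ and $\cE_A(f) := f$ then preserves identities and compositions trivially, because both assignments act as the identity on the underlying Lie algebra and Lie algebra morphism.

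I do not expect any genuine obstacle: once Lemma \ref{lem:ders} is in place, the construction of $\cE_A$ is purely formal, amounting to post-composing anchors with the fixed Lie algebra morphism $e$. The only thing worth emphasizing explicitly in the write-up is the identification $\omega_\otimes = e \circ \omega$, which reduces everything to functoriality of the assignment $\omega \mapsto e \circ \omega$.
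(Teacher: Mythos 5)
Your proof is correct and follows exactly the paper's own argument: both identify $\omega_\otimes$ as the composite $e\circ\omega$ of Lie algebra morphisms via Lemma \ref{lem:ders} and then note that $\cE_A$ acts as the identity on morphisms, making functoriality immediate. Your write-up merely spells out the anchor-compatibility check that the paper leaves implicit.
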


\begin{proof}
In view of Lemma \ref{lem:ders}, the composition $L \xrightarrow{\omega} \Derk{A} \xrightarrow{e} \Derk{\env{A}}$ mapping $X$ to $\omega(X)_\otimes$ is a Lie algebra map. %, whence $(L,\omega_\otimes)$ is an $\env{A}$-anchored Lie algebra.
%Since $\omega(X) \in \Derk{R}$, Lemma \ref{lem:ders} ensures that $\omega(X)_\otimes \in \Derk{\env{R}}$.
%\begin{align*}
%\omega_\otimes(X)& (aa' \otimes \op{(b'b)}) = \omega(X)(aa') \otimes \op{(b'b)} + aa' \otimes \op{\omega(X)(b'b)} \\
 %& = \omega(X)(a)a' \otimes \op{(b'b)} + a\omega(X)(a') \otimes \op{(b'b)} + aa' \otimes \op{\omega(X)(b')} + aa' \otimes \op{b'\omega(X)(b)} \\
 %& = \omega_\otimes(X)(a \otimes \op{b})(a'\otimes \op{b'}) + (a \otimes \op{b})\omega_\otimes(X)(a'\otimes \op{b'}),
%\end{align*}
%we have that $\omega_\otimes$ is well-defined and since
%\begin{align*}
%\omega_\otimes([X,Y])& (a \otimes \op{b}) = \omega([X,Y])(a) \otimes \op{b} + a \otimes \op{\omega([X,Y])(b)} \\
 %& = \omega(X)\omega(Y)(a) \otimes \op{b} - \omega(Y)\omega(X)(a) \otimes \op{b} + a \otimes \op{\omega(X)\omega(Y)(b)} - a \otimes \op{\omega(Y)\omega(X)(b)} ,
%\end{align*}
%while
%\begin{align*}
%\omega_\otimes(X)& \omega_\otimes(Y) (a \otimes \op{b}) - \omega_\otimes(Y)\omega_\otimes(X)(a \otimes \op{b}) \\
%& = \omega(X)\omega(Y)(a) \otimes \op{b} + \omega(Y)(a) \otimes \op{\omega(X)(b)} + \omega(X)(a) \otimes \op{\omega(Y)(b)} + a \otimes \op{\omega(X)\omega(Y)(b)}  - \omega(Y)\omega(X)(a) \otimes \op{b} - \omega(X)(a) \otimes \op{\omega(Y)(b)} - \omega(Y)(a) \otimes \op{\omega(X)(b)} - a \otimes \op{\omega(Y)\omega(X)(b)} \\
 %& = \omega(X)\omega(Y)(a) \otimes \op{b} + a \otimes \op{\omega(X)\omega(Y)(b)}  - \omega(Y)\omega(X)(a) \otimes \op{b} - a \otimes \op{\omega(Y)\omega(X)(b)},
%\end{align*}
%we have that $\omega_\otimes$ is of Lie algebras. 
Thus, the assignment $(L,\omega) \mapsto (L,\omega_\otimes)$ is well-defined and it is clearly functorial (it acts as the identity on morphisms). 
\end{proof}

The functor $\cE_A$ of Proposition \ref{prop:E} naturally admits a left adjoint functor. In order to introduce it, we take advantage of the construction of the product in the category $\ALie{A}$.

\begin{proposition}\label{prop:prod}
Let $(L,\omega)$ and $(L',\omega')$ be $A$-anchored Lie algebras. The product of $(L,\omega)$ and $(L',\omega')$ in $\ALie{A}$ exists and can be computed as the pullback of vector spaces
\begin{equation}\label{eq:prod}
\begin{gathered}
\xymatrix @=17pt{
L \underset{\Derk{A}}{\times} L' \ar[r]^-{q_1} \ar[d]_-{q_2} \ar@{}[dr]|(0.3){\Big\lrcorner} & L \ar[d]^-{\omega} \\
L' \ar[r]_-{\omega'} & \Derk{A},
}
\end{gathered}
\end{equation}
with component-wise bracket and anchor $\omega_\times \coloneqq \omega\circ q_1 = \omega'\circ q_2$.
\end{proposition}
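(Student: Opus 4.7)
The plan is straightforward: define the candidate, check it lives in $\ALie{A}$, and then verify the universal property using the universal property of the pullback of vector spaces.

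First I would set $P \coloneqq L \times_{\Derk{A}} L'$, the pullback of vector spaces; by the paragraph on algebraic forgetful functors creating limits, $P$ is simply the subspace of pairs $(X,X') \in L \times L'$ with $\omega(X) = \omega'(X')$, with projections $q_1,q_2$. Next I would define a Lie bracket on $P$ componentwise, namely $[(X_1,X_1'),(X_2,X_2')] \coloneqq ([X_1,X_2],[X_1',X_2'])$, and show it lands in $P$: since $\omega$ and $\omega'$ are Lie algebra morphisms, $\omega([X_1,X_2]) = [\omega(X_1),\omega(X_2)] = [\omega'(X_1'),\omega'(X_2')] = \omega'([X_1',X_2'])$. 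Bilinearity, antisymmetry and the Jacobi identity are inherited componentwise from $L$ and $L'$, so $P$ becomes a Lie algebra for which $q_1,q_2$ are Lie algebra morphisms.

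Then I would define the anchor $\omega_\times \coloneqq \omega\circ q_1$ (which equals $\omega'\circ q_2$ by definition of $P$); it is a Lie algebra morphism, being a composition of such. This makes $(P,\omega_\times)$ an object of $\ALie{A}$ and both $q_1,q_2$ into morphisms in $\ALie{A}$ because already $\omega \circ q_1 = \omega_\times$ and $\omega' \circ q_2 = \omega_\times$.

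Finally, for the universal property, suppose $(M,\omega_M)$ is an $A$-anchored Lie algebra with morphisms $f:(M,\omega_M)\to (L,\omega)$ and $f':(M,\omega_M)\to (L',\omega')$ in $\ALie{A}$. Then $\omega\circ f = \omega_M = \omega'\circ f'$, so by the universal property of the pullback of vector spaces there exists a unique $\K$-linear map $h:M \to P$ with $q_1\circ h = f$ and $q_2\circ h = f'$, concretely $h(m) = (f(m),f'(m))$. That $h$ is a Lie algebra morphism is immediate from the componentwise bracket and the fact that $f,f'$ are Lie morphisms, and $\omega_\times \circ h = \omega\circ f = \omega_M$, so $h$ is a morphism in $\ALie{A}$; uniqueness follows from uniqueness at the vector space level.

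No step here poses a real obstacle: the entire argument amounts to transferring the pullback property from $\vectk$ to $\ALie{A}$, and the only verification worth isolating is that the componentwise bracket is well defined on $P$, which is precisely where the assumption that $\omega,\omega'$ are Lie algebra morphisms is used.
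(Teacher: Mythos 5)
Your proposal is correct and follows exactly the route of the paper's proof: form the vector-space pullback, observe that the component-wise bracket is well defined because $\omega,\omega'$ are Lie algebra morphisms, take $\omega_\times=\omega\circ q_1=\omega'\circ q_2$ as anchor, and transfer the universal property from $\vectk$ to $\ALie{A}$. The only difference is that you write out the universal-property verification that the paper explicitly leaves to the reader.
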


\begin{proof}
Since the pullback in vector spaces of a diagram of Lie algebras is naturally a Lie algebra with component-wise bracket, $L \times_{\Derk{A}} L'$ is a Lie algebra and $q_1,q_2$ are Lie algebra maps. The fact that the $A$-anchored Lie algebra $\big(L \times_{\Derk{A}} L',\omega_\times\big)$ satisfies the universal property of the product is an easy check that we leave to the interested reader.
\end{proof}

Let $(M,\varpi)$ be an $\env{A}$-anchored Lie algebra. By Lemma \ref{lem:ders}, $\big(\Derk{A},e\big)$ is an $\env{A}$-anchored Lie algebra as well and we can consider their product in $\ALie{\env{A}}$:
\[%\begin{equation}\label{eq:2ndpullback}
\begin{gathered}
\xymatrix @=17pt{
\cF_A(M,\varpi) \ar[r]^-{q_1} \ar[d]_-{q_2} \ar@{}[dr]|(0.25){\Big\lrcorner} & M \ar[d]^-{\varpi} \\
\Derk{A} \ar[r]_-{e} & \Derk{\env{A}}
}
\end{gathered}
\]%\end{equation}
%for every $\env{A}$-anchored Lie algebra $(M,\varpi)$. 
with component-wise bracket and anchor $e \circ q_2 = \varpi \circ q_1$. Concretely,
\[%\begin{equation}\label{eq:cL}
\cF_A(M,\varpi) \hspace{-1pt} = \hspace{-1pt} \Big\{ \hspace{-1pt}\left(X,\delta\right) \hspace{-1pt} \in \hspace{-1pt} M \hspace{-1pt} \times \hspace{-1pt} \Der_\K(A) \,\big|\, \varpi(X)(a \hspace{-1pt} \otimes \hspace{-1pt} \op{b}) = \delta(a) \hspace{-1pt} \otimes \hspace{-1pt} \op{b} \hspace{-1pt} + \hspace{-1pt} a \hspace{-1pt} \otimes \hspace{-1pt} \op{\delta(b)}, \forall a,b \in A \Big\}.
\]%\end{equation}

\begin{theorem}\label{thm:AeAadj}
For every $\env{A}$-anchored Lie algebra $(M,\varpi)$, the $\K$-vector space $\cF_A(M,\varpi)$ is an $A$-anchored Lie algebra with component-wise bracket and anchor $q_2$. The assignment $(M,\varpi) \mapsto \big(\cF_A(M,\varpi),q_2\big)$ induces a functor $\cF_A : \ALie{\env{A}} \to \ALie{A}$ which is right adjoint to the functor $\cE_A:\ALie{A} \to \ALie{\env{A}}$.
\end{theorem}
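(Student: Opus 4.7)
The plan is to exploit the fact that $\cF_A(M,\varpi)$ is defined as a pullback in $\ALie{\env{A}}$, so Proposition \ref{prop:prod} already hands us a Lie algebra structure with component-wise bracket, together with Lie algebra morphisms $q_1,q_2$. Consequently $(\cF_A(M,\varpi), q_2)$ is automatically an $A$-anchored Lie algebra, since $q_2$ is a Lie algebra morphism to $\Derk{A}$. For the action on morphisms, I would set $\cF_A(f)(X,\delta) \coloneqq (f(X), \delta)$ for $f:(M,\varpi) \to (M',\varpi')$ in $\ALie{\env{A}}$; the compatibility $\varpi' \circ f = \varpi$ ensures that this pair lies in $\cF_A(M',\varpi')$, and both functorial axioms then follow immediately.

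For the adjunction $\cE_A \dashv \cF_A$, my strategy is to exhibit the hom-set bijection directly rather than to first construct unit and counit. Given $(L,\omega) \in \ALie{A}$ and $(M,\varpi) \in \ALie{\env{A}}$, I would set
\[
\Phi \colon \Homk_{\ALie{\env{A}}}\!\left(\cE_A(L,\omega), (M,\varpi)\right) \longrightarrow \Homk_{\ALie{A}}\!\left((L,\omega), \cF_A(M,\varpi)\right), \quad f \longmapsto \bigl(X \mapsto (f(X), \omega(X))\bigr),
\]
with candidate inverse $\Psi(g) \coloneqq q_1 \circ g$. The key verification is that $f$ being a morphism in $\ALie{\env{A}}$ says exactly $\varpi \circ f = \omega_\otimes = e \circ \omega$, which by Lemma \ref{lem:ders} unpacks to $\varpi(f(X))(a \otimes \op{b}) = \omega(X)(a) \otimes \op{b} + a \otimes \op{\omega(X)(b)}$: this is precisely the condition that $(f(X), \omega(X)) \in \cF_A(M,\varpi)$. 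Conversely, if $g$ satisfies $q_2 \circ g = \omega$, the pullback relation $\varpi \circ q_1 = e \circ q_2$ gives $\varpi \circ (q_1 \circ g) = e \circ \omega = \omega_\otimes$, so $q_1 \circ g$ is a morphism in $\ALie{\env{A}}$. The assignments are manifestly mutually inverse, and both $\Phi(f)$ and $\Psi(g)$ are Lie algebra maps because their components are.

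Finally, naturality of $\Phi$ in both variables is routine: in the $(M,\varpi)$-variable it holds because $\cF_A$ acts trivially on the second component, and in the $(L,\omega)$-variable because the second component of $\Phi(f)$ is simply $\omega$, which is itself functorial in $(L,\omega)$. The main, largely conceptual, obstacle I anticipate is not any delicate computation but rather recognizing that the pullback defining $\cF_A(M,\varpi)$ has been rigged precisely so as to represent the functor $\Homk_{\ALie{\env{A}}}(\cE_A(-), (M,\varpi))$ on $\ALie{A}^{\mathrm{op}}$; once this identification is made, the rest of the argument reduces to bookkeeping on the universal properties of pullbacks and the defining conditions of anchored morphisms.
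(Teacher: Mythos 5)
Your proposal is correct and matches the paper's argument in substance: both rest on the single key observation that the pullback condition defining $\cF_A(M,\varpi)$ (namely $\varpi(X)=e(\delta)$) is exactly the condition for a Lie algebra map out of $L$ to be a morphism $\cE_A(L,\omega)\to(M,\varpi)$ in $\ALie{\env{A}}$. The only difference is packaging: the paper observes that $L$ itself, with $q_1=\id$ and $q_2=\omega$, is already a pullback of the pair $(e,\omega_\otimes)$, so the identity furnishes the unit's component as a universal arrow (invoking Mac Lane), whereas you spell out the equivalent hom-set bijection $\Phi,\Psi$ explicitly.
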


\begin{proof}
We already know that $\cF_A(M,\varpi)$ is a Lie algebra and that $q_2$ is a morphism of Lie algebras.
%Being $\cF_A(M,\varpi)$ the pullback of a diagram of Lie algebras, it is a Lie algebra itself with component-wise bracket and $q_1,q_2$ are morphisms of Lie algebras. 
Moreover, the assignment $(M,\varpi) \mapsto \big(\cF_A(M,\varpi),q_2\big)$ is functorial.
%For every $\env{A}$-anchored Lie algebra $(M,\varpi)$, the $\K$-vector space $\cF_A(M,\varpi)$ is an $A$-anchored Lie algebra because pullback of a diagram of Lie algebras (in particular, $q_2$ is of Lie algebras), which also entails that the assignment $(M,\varpi) \mapsto \big(\cF_A(M,\varpi),q_2\big)$ is functorial.

To show that $\cF_A$ is right adjoint to $\cE_A:\ALie{A} \to \ALie{\env{A}}$, notice that $L$ itself with $q_1 = \id$ and $q_2 = \omega$ is a pullback of the pair $(e,\omega_\otimes)$. 
%\[
%\xymatrix@=15pt{
 %& L \ar[d]^-{\omega_\otimes} \\
%\Derk{A} \ar[r]_-{e} & \Derk{\env{A}}.
%}
%\]
%Therefore, the identity morphism of $L$ plays the role of the component of the unit $\eta_L$. It is clear then that the collection $\{\eta_L\mid(L,\omega) \in \ALie{A}\}$ defines a natural transformation $\eta:\id \to \cF_A\circ \cE_A$ such that for every $L$, the component $\eta_L$ is a universal arrow from $L$ to $\cF_A$.
Therefore, the identity morphism of $L$ plays the role of the component of the unit at $L$ and it is a universal arrow from $L$ to $\cF_A$.
%
%consider the $\K$-linear map
%\[\eta_L: L \to \cF_A(L,\omega_\otimes), \qquad X \mapsto \big(X,\omega(X)\big).\]
%Since we have $\omega_\otimes = e \circ \omega$ by definition of $\omega_\otimes$, the map $\eta_L$ is uniquely induced via the universal property of the pullback and it is clearly a Lie algebra map, compatible with the anchors. It is also clear that the collection $\{\eta_L\mid(L,\omega) \in \ALie{A}\}$ defines a natural transformation $\eta:\id \to \cF_A\circ \cE_A$. Now, let $(L,\omega)$ be an $A$-anchored Lie algebra. For any $\env{A}$-anchored Lie algebra $(M,\varpi)$ and any morphism
%\[f:(L,\omega) \to \cF_A(M,\varpi), \qquad X \mapsto \big(\tilde{f}(X),\omega(X)\big),\]
%of $A$-anchored Lie algebras, where $\tilde{f} = q_1 \circ f$, we have that
%\[\varpi\big(\tilde{f}(X)\big)(a \otimes \op{b}) \stackrel{\eqref{eq:2ndpullback}}{=} \omega(X)_\otimes (a \otimes \op{b})\]
%for all $a,b \in A$, which implies that $\tilde{f}:(L,\omega_\otimes) \to (M,\varpi)$ is a morphism of $\env{A}$-anchored Lie algebras such that $\cF_A\big(\tilde{f}\big)\circ \eta_L = f$ (and it is, in fact, the unique satisfying such a property). 
In view of \cite[III.1, Theorem 2(i)]{MacLane}, the pair $\cE_A,\cF_A$ forms an adjoint pair of functors.
\end{proof}

Summing up, for every $\K$-algebra $A$ we have an adjunction
\begin{equation}\label{eq:AeAadj}
\begin{gathered}
\xymatrix@=17pt{
\ALie{\env{A}} \ar@/^2ex/@<+0.5ex>[d]^-{\cF_{A}} \\
\ALie{A}. \ar@/^2ex/@<+0.5ex>[u]^-{\cE_{A}}
}
\end{gathered}
\end{equation}

% --------------------------------------------------------------------------------------------------------------------------------- %
% LEFT ADJOINT LIE-RINEHART
% --------------------------------------------------------------------------------------------------------------------------------- %

\section{The universal enveloping $\env{A}$-ring as a left adjoint functor}\label{sec:UEAeRing}

Let $A,R$ be $\K$-algebras and let $(M,\varpi)$ be an $R$-anchored Lie algebra. Assume that a morphism of $\K$-algebras $\phi:A \to R$ has been given. In this way, $R$ becomes an $A$-ring and we can consider the $\K$-vector space
\[
\Der_{\K}(A,R) = \Big\{f \in \Hom{}{}{\K}{}{A}{R} ~\big|~ f(ab) = f(a)\phi(b) + \phi(a)f(b) \text{ for all }a,b \in A\Big\}.
\]
Define $\cA_A^R(M,\varpi)$ to be the pullback
\begin{equation}\label{eq:diagram}
\begin{gathered}
\xymatrix@=13pt{
\cA_A^R(M,\varpi) \ar@{}[ddr]|(0.2){\Big\lrcorner} \ar[r]^-{p_1} \ar[dd]_-{p_2} & M \ar[d]^-{\varpi} \\
 & \Der_\K(R) \ar[d]^-{\phi^*} \\
\Der_\K(A) \ar[r]_-{\phi_*} & \Der_\K(A,R)
}
\end{gathered}
\end{equation}
computed in the category of $\K$-vector spaces. Concretely,
\begin{equation}\label{eq:cL}
\cA_A^R(M,\varpi) = \Big\{\left(X,\delta\right) \in M \times \Der_\K(A) ~\big|~ \varpi(X)\big(\phi(a)\big) = \phi\big(\delta(a)\big) \text{ for all }a \in A\Big\}.
\end{equation}

\begin{lemma}\label{lem:keyAnch}
Let $(R,\phi)$ be an $A$-ring and $(M,\varpi)$ be an $R$-anchored Lie algebra. Then the $\K$-vector space $\cA_A^R(M,\varpi)$ is an $A$-anchored Lie algebra with anchor $p_2:\cA_A^R(M,\varpi) \to \Der_\K(A)$ and with component-wise bracket. Furthermore, $p_1$ is a Lie algebra morphism.
\end{lemma}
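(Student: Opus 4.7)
The plan is to verify a single closure property and then let the rest of the statement follow formally from the properties of the Cartesian product of Lie algebras.

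First I would recall that on the vector space $M \times \Der_\K(A)$ there is a component-wise bracket $[(X,\delta),(Y,\epsilon)] = ([X,Y], [\delta,\epsilon])$ making it a Lie algebra (product in $\liek$), and that the two projections to $M$ and $\Der_\K(A)$ are Lie algebra morphisms. Since $\cA_A^R(M,\varpi)$ sits inside this product via the description \eqref{eq:cL}, the whole content of the lemma reduces to showing that the subspace $\cA_A^R(M,\varpi)$ is closed under this component-wise bracket; once this is done, the restricted bracket is automatically a Lie bracket, the projections $p_1$ and $p_2$ are automatically Lie algebra morphisms, and the fact that $p_2$ lands in $\Der_\K(A)$ is precisely what it means for $(\cA_A^R(M,\varpi), p_2)$ to be an $A$-anchored Lie algebra.

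The key step is therefore to take $(X,\delta), (Y,\epsilon) \in \cA_A^R(M,\varpi)$ and check, for every $a \in A$, the identity
\[
\varpi\bigl([X,Y]\bigr)\bigl(\phi(a)\bigr) = \phi\bigl([\delta,\epsilon](a)\bigr).
\]
Using that $\varpi$ is a Lie algebra morphism (so $\varpi([X,Y]) = \varpi(X)\varpi(Y) - \varpi(Y)\varpi(X)$ as elements of $\Der_\K(R)$), one applies the defining relation of the pullback twice: first $\varpi(Y)(\phi(a)) = \phi(\epsilon(a))$, then $\varpi(X)(\phi(\epsilon(a))) = \phi(\delta(\epsilon(a)))$, and symmetrically for the other term. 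Subtracting gives $\phi(\delta\epsilon(a) - \epsilon\delta(a)) = \phi([\delta,\epsilon](a))$, as required.

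I do not anticipate a real obstacle: the only substantive computation is the one just sketched, and it is a direct chase through the pullback condition combined with $\varpi$ being a Lie morphism. The rest of the statement is formal, because pullbacks in $\vectk$ of diagrams of Lie algebras in which the outgoing maps are Lie algebra morphisms inherit a Lie structure; here the subtlety is that $\Der_\K(A,R)$ is only a vector space (not a Lie algebra), which is why the closure must be checked by hand rather than invoked from a general pullback-in-$\liek$ principle.
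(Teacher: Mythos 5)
Your proposal is correct and follows the same route as the paper: both reduce the lemma to checking that $\cA_A^R(M,\varpi)$ is closed under the component-wise bracket, verified by applying the defining condition \eqref{eq:cL} twice together with the fact that $\varpi$ is a Lie algebra morphism, after which the statements about $p_1$, $p_2$ and the anchored structure are formal. Your write-up actually spells out the two-step substitution that the paper leaves as ``one checks''.
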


\begin{proof}
Let $(X,\delta), (X',\delta') \in \cA_A^R(M ,\varpi)$. By using \eqref{eq:cL} one checks that
\[
\varpi\big([X,X']\big)\big(\phi(a)\big) = \phi\Big([\delta,\delta'](a)\Big)
\]
%Since
%\begin{align*}
%\varpi\big([X,X']\big)\big(\phi(a)\big) & = \big(\varpi(X)\circ \varpi(X') - \varpi(X')\circ \varpi(X)\big)\big(\phi(a)\big) \\
 %& \stackrel{\eqref{eq:cL}}{=} \varpi(X)\Big(\phi\big(\delta'(a)\big)\Big) - \varpi(X')\Big(\phi\big(\delta(a)\big)\Big) \\
 %& \stackrel{\eqref{eq:cL}}{=} \phi\Big(\delta\big(\delta'(a)\big)\Big) - \phi\Big(\delta'\big(\delta(a)\big)\Big) = \phi\Big([\delta,\delta'](a)\Big)
%\end{align*}
for all $a \in A$. Therefore $\big([X,X'],[\delta,\delta']\big) \in \cA_A^R(M ,\varpi)$, which shows that $\cA_A^R(M ,\varpi)$ is a Lie algebra. Clearly, $p_2$ is a morphism of Lie algebras with respect to this structure and hence $\cA_A^R(M ,\varpi)$ is an $A$-anchored Lie algebra, as claimed.
\end{proof}

\begin{proposition}\label{prop:keyAnch}
For any $A$-ring $(R,\phi)$, the assignment $(M,\varpi) \mapsto \cA_A^R(M,\varpi)$ induces a functor $\cA_A^R:\ALie{R} \to \ALie{A}$.
\end{proposition}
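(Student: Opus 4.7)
The plan is to define the functor on morphisms via the universal property of the pullback, since $\cA_A^R(M,\varpi)$ is constructed as such a pullback in \eqref{eq:diagram}. Given a morphism $f : (M,\varpi) \to (M',\varpi')$ in $\ALie{R}$, that is, a Lie algebra morphism $f : M \to M'$ such that $\varpi' \circ f = \varpi$, I would consider the two maps $f \circ p_1 : \cA_A^R(M,\varpi) \to M'$ and $p_2 : \cA_A^R(M,\varpi) \to \Der_\K(A)$. The equality $\phi^* \circ \varpi' \circ f \circ p_1 = \phi^* \circ \varpi \circ p_1 = \phi_* \circ p_2$ shows that these form a cone over the pullback diagram defining $\cA_A^R(M',\varpi')$, hence the universal property yields a unique $\K$-linear map
\[
\cA_A^R(f) : \cA_A^R(M,\varpi) \to \cA_A^R(M',\varpi')
\]
satisfying $p_1 \circ \cA_A^R(f) = f \circ p_1$ and $p_2 \circ \cA_A^R(f) = p_2$. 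Explicitly, $\cA_A^R(f)(X,\delta) = (f(X),\delta)$, and one immediately verifies from the description \eqref{eq:cL} that this element indeed lies in $\cA_A^R(M',\varpi')$.

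Next I would verify that $\cA_A^R(f)$ is a morphism in $\ALie{A}$. Since the bracket on both $\cA_A^R(M,\varpi)$ and $\cA_A^R(M',\varpi')$ is component-wise (by Lemma \ref{lem:keyAnch}) and $f$ is a Lie algebra morphism by assumption, $\cA_A^R(f)$ is a Lie algebra morphism. Compatibility with the anchors, i.e. $p_2 \circ \cA_A^R(f) = p_2$, is built in by construction. Functoriality—preservation of identities and composition—follows at once from the explicit formula, or equivalently from the uniqueness in the universal property of the pullback applied to $\id_M$ and to $g \circ f$.

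Since all the ingredients are essentially formal consequences of Lemma \ref{lem:keyAnch} and of the universal property of the pullback, I do not anticipate any substantial obstacle; the main thing to be careful about is spelling out the cone condition $\phi^* \circ \varpi' \circ f \circ p_1 = \phi_* \circ p_2$ so that the universal property can be invoked cleanly.
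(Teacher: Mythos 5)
Your proposal is correct and follows essentially the same route as the paper: the paper also obtains $\cA_A^R(f)$ from the universal property of the pullback (the commuting ``external hexagon'' in its diagram is exactly your cone condition $\phi^* \circ \varpi' \circ f \circ p_1 = \phi_* \circ p_2$), records the same explicit formula $(X,\delta) \mapsto (f(X),\delta)$, and deduces the Lie algebra and anchor compatibilities and functoriality in the same way.
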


\begin{proof}
We already know how $\cA_A^R$ acts on objects. To see how it acts on morphism, notice that it $f : (M,\varpi) \to (M',\varpi')$ is a morphism of $R$-anchored Lie algebras then the external hexagon in the following diagram
\[
\xymatrix @=14pt{
\cA_A^R\big(M,\varpi\big) \ar@{.>}[dr]_-{\cA_A^R(f)} \ar[r]^-{p_1} \ar@/_3ex/[dddr]_-{p_2} & M \ar[dr]^-{f} & \\
 & \cA_A^R\big(M' ,\varpi'\big) \ar@{}[ddr]|(0.2){\Big\lrcorner} \ar[r]_-{p_1'} \ar[dd]_-{p_2'} & M' \ar[d]^-{\varpi'} \\
 & & \Der_\K(R) \ar[d]^-{\phi^*} \\
 & \Der_\K(A) \ar[r]_-{\phi_*} & \Der_\K(A,R),
}
\]
commutes by definition of $\cA_A^R(M,\varpi)$. Thus, by the universal property of the pullback, there exists a unique morphism of $\K$-vector spaces $\cA_A^R(f):\cA_A^R\big(M ,\varpi\big) \to \cA_A^R\big(M',\varpi'\big)$ such that $p_2'\circ \cA_A^R(f) = p_2$ and $p_1'\circ \cA_A^R(f) = f \circ p_1$, which is explicitly given by
\begin{equation}\label{eq:indmapanch}
\cA_A^R(f) : \big(X,\delta\big) \mapsto \big(f(X),\delta\big).
\end{equation}
Since $f$ is a morphism of Lie algebras, it follows that $\cA_A^R(f)$ is of Lie algebras as well. The compatibility with the anchors is clear, whence $\cA_A^R(f)$ is of $A$-anchored Lie algebras. Since on arrows $\cA_A^R$ is defined in terms of a universal property, it is functorial. % and so we have a well-defined functor $\cA_A^R:\ALie{R} \to \ALie{A}$.
\end{proof}

%The subsequent corollaries of Proposition \ref{prop:keyAnch} are straightforward observations.

\begin{corollary}\label{cor:RAanch}
Let $(R,\phi)$ be an $A$-ring and denote by $\varpi_R: R \to \Der_\K(R)$ the structure of $R$-anchored Lie algebra induced on $R$ by the commutator bracket as in Example \ref{ex:Ranch}. The $\K$-vector space $\cA_A^R(R,\varpi_R)$ is an $A$-anchored Lie algebra with anchor $p_2:\cA_A^R(R,\varpi_R) \to \Der_\K(A)$ and with component-wise bracket.
\end{corollary}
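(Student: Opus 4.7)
The statement is essentially an immediate specialization of Lemma~\ref{lem:keyAnch}, so the plan is to explain why Example~\ref{ex:Ranch} makes the input data meaningful and then cite the lemma. Concretely, I would observe that $(R,\varpi_R)$, as set up in Example~\ref{ex:Ranch}, is a bona fide $R$-anchored Lie algebra: its underlying Lie algebra is $R$ with the commutator bracket and its anchor $\varpi_R: R \to \Der_\K(R)$, $r \mapsto [r,-]$, is a Lie algebra morphism because inner derivations form a Lie ideal in $\Der_\K(R)$ and $\varpi_R([r,s]) = [\varpi_R(r),\varpi_R(s)]$ by the Jacobi identity.

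With that in hand, the plan is simply to feed $(M,\varpi) = (R,\varpi_R)$ into the construction \eqref{eq:diagram}--\eqref{eq:cL} and invoke Lemma~\ref{lem:keyAnch}. This immediately yields that $\cA_A^R(R,\varpi_R)$, explicitly given by
\[
\cA_A^R(R,\varpi_R) = \Big\{(r,\delta) \in R \times \Der_\K(A) ~\big|~ [r,\phi(a)] = \phi\big(\delta(a)\big) \text{ for all } a \in A\Big\},
\]
is a Lie subalgebra of $R \times \Der_\K(A)$ with componentwise bracket, and that the projection $p_2$ onto $\Der_\K(A)$ is the anchor. No additional verification is required beyond what Lemma~\ref{lem:keyAnch} already provides.

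Since the corollary is a direct application of previously established machinery, there is no real obstacle; the only thing worth double-checking is that the hypotheses of Lemma~\ref{lem:keyAnch} are indeed satisfied, namely that $(R,\varpi_R)$ qualifies as an $R$-anchored Lie algebra in the sense of Section~\ref{ssec:Aanch}, which is precisely the content of Example~\ref{ex:Ranch}. The proof should therefore be one or two lines long, pointing to Example~\ref{ex:Ranch} and Lemma~\ref{lem:keyAnch}.
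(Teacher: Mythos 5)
Your proposal is correct and matches the paper's (implicit) argument exactly: the corollary is stated without proof precisely because it is the specialization of Lemma \ref{lem:keyAnch} to $(M,\varpi)=(R,\varpi_R)$, with Example \ref{ex:Ranch} guaranteeing that $(R,\varpi_R)$ is a legitimate $R$-anchored Lie algebra. Nothing further is needed.
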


%\begin{corollary}\label{cor:AeAnch}
%The $\K$-algebra morphism $\lambda : A \to \env{A}, a \mapsto a \otimes \op{1},$ induces a functor $\cA_A^R:\ALie{\env{A}} \to \ALie{A}$.
%\end{corollary}

%In fact, the functor from Corollary \ref{cor:AeAnch} admits a left adjoint.

It follows from Corollary \ref{cor:RAanch} that to any $A$-ring $R$ we may assign an $A$-anchored Lie algebra $\big(\cA_A^R(R ,\varpi_R),p_2\big)$.

\begin{theorem}\label{thm:cLfuncts}
The assignment $R \mapsto \cA_A^R(R ,\varpi_R)$ induces a functor $\cL_A:\Ring_A \to \ALie{A}$.
\end{theorem}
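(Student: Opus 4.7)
The assignment on objects has already been established via Corollary~\ref{cor:RAanch}, so the task is twofold: define $\cL_A$ on morphisms via the universal property of the pullback in \eqref{eq:diagram}, and then verify both that the resulting map is a morphism of $A$-anchored Lie algebras and that the assignment is functorial. Given a morphism $\varphi:(R,\phi)\to(S,\psi)$ in $\Ring_A$, the idea is to exhibit the pair $(\varphi\circ p_1,\,p_2)$ as a cone over the pullback diagram defining $\cA_A^S(S,\varpi_S)$, thereby inducing the desired map $\cL_A(\varphi):\cA_A^R(R,\varpi_R)\to \cA_A^S(S,\varpi_S)$ by the universal property.

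The first step is to check the cone condition, namely that $\psi^{*}\circ \varpi_S\circ\varphi\circ p_1 = \psi_{*}\circ p_2$. For $(r,\delta)\in \cA_A^R(R,\varpi_R)$ and $a\in A$, this amounts to
\[
\bigl[\varphi(r),\psi(a)\bigr] \;=\; \psi\bigl(\delta(a)\bigr),
\]
which follows immediately by substituting $\psi=\varphi\circ\phi$, using that $\varphi$ is a $\K$-algebra morphism (hence preserves commutators), and invoking the defining relation $[r,\phi(a)]=\phi(\delta(a))$ of $\cA_A^R(R,\varpi_R)$. The universal property of the pullback then yields a unique $\K$-linear map $\cL_A(\varphi)$ satisfying $p_1'\circ \cL_A(\varphi)=\varphi\circ p_1$ and $p_2'\circ \cL_A(\varphi)=p_2$, whose explicit form is $(r,\delta)\mapsto\bigl(\varphi(r),\delta\bigr)$.

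Next, I would verify that $\cL_A(\varphi)$ is a morphism in $\ALie{A}$. Compatibility with the anchor is built into the definition ($p_2'\circ\cL_A(\varphi)=p_2$). Compatibility with the bracket is similarly immediate: the brackets on $\cA_A^R(R,\varpi_R)$ and $\cA_A^S(S,\varpi_S)$ are component-wise by Lemma~\ref{lem:keyAnch}, and $\varphi$ preserves the commutator bracket, so $\cL_A(\varphi)([(r,\delta),(r',\delta')])=\bigl([\varphi(r),\varphi(r')],[\delta,\delta']\bigr)$ coincides with the bracket of the images. Finally, functoriality (preservation of identities and composition) is a direct consequence of the uniqueness in the universal property of the pullback, exactly as was invoked at the end of the proof of Proposition~\ref{prop:keyAnch}; alternatively, it can be read off from the explicit formula $(r,\delta)\mapsto(\varphi(r),\delta)$.

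There is no genuine obstacle in this argument; the only point requiring mild care is the cone verification, where one must correctly unwind the pullback conditions on both sides and combine the hypothesis $\varphi\circ\phi=\psi$ with the fact that a $\K$-algebra morphism preserves commutators. Everything else is a formal invocation of the universal property that was already used to define $\cA_A^R$ on morphisms in Proposition~\ref{prop:keyAnch}.
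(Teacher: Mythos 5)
Your proposal is correct and follows essentially the same route as the paper: both induce $\cL_A(\varphi)$ from the universal property of the pullback defining $\cA_A^S(S,\varpi_S)$, arrive at the explicit formula $(r,\delta)\mapsto(\varphi(r),\delta)$, and deduce the bracket/anchor compatibility and functoriality from $\varphi$ being an $A$-ring morphism and from uniqueness. The only cosmetic difference is that the paper packages your element-wise cone verification $[\varphi(r),\psi(a)]=\psi(\delta(a))$ as the commutativity of a square and a triangle in one pasted diagram.
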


\begin{proof}
We already know how $\cL_A$ acts on objects. To see how it acts on morphisms, notice that it $\varphi : (R,\phi) \to (S,\psi)$ is a morphism of $A$-rings then the right-most square and the lowest triangle in the following diagram commute:
\[
\xymatrix@=14pt{
\cA_A^R\big(R ,\varpi_R\big) \ar@{}[ddr]|(0.2){\Big\lrcorner} \ar[r]^-{p_1} \ar[dd]_-{p_2} & R  \ar[d]^-{\varpi_R} \ar[dr]^-{\varphi} & \\
 & \Der_\K(R) \ar[d]^-{\phi^*} & S \ar[d]^-{\varpi_S} \\
\Der_\K(A) \ar[r]_-{\phi_*} \ar@/_4ex/[drr]_-{\psi_*} & \Der_\K(A,R) \ar[dr]^-{\varphi_*} & \Der_\K(S) \ar[d]^-{\psi^*} \\
 & & \Der_\K(A,S).
}
\]
Therefore, by the universal property of the pullback, there exists a unique morphism of $\K$-vector spaces $\cL_A(\varphi):\cA_A^R\big(R ,\varpi_R\big) \to \cA_A^S\big(S ,\varpi_S\big)$, which is explicitly given by
\begin{equation}\label{eq:indmap}
\cL_A(\varphi) : (r,\delta) \mapsto \big(\varphi(r),\delta\big).
\end{equation}
Since $\varphi$ is a $\K$-algebra morphism satisfying $\varphi\circ \phi = \psi$, it follows that $\cL_A(\varphi)$ is a Lie algebra morphism. The compatibility with the anchors is clear, whence $\cL_A(\varphi)$ is of $A$-anchored Lie algebras. Since on arrows $\cL_A$ is defined in terms of a universal property, it is functorial and so we have a well-defined functor $\cL_A:\Ring_A \to \ALie{A}$.
\end{proof}

\begin{remark}
Let $\varphi:(R,\phi) \to (S,\psi)$ be a morphism of $A$-rings, as in the proof of Theorem \ref{thm:cLfuncts}. Set $\cA_R^S:\ALie{S} \to \ALie{R}$, $\cA_A^R:\ALie{R} \to \ALie{A}$ and $\cA_A^S:\ALie{S} \to \ALie{A}$ for the functors induced by $\varphi$, $\phi$ and $\psi$ respectively. The universal property of the pullback gives a morphism of $R$-anchored Lie algebras $\tilde{\varphi}:(R,\varpi_R) \to \cA_R^S(S,\varpi_S)$ induced by $\varphi$ and a morphism of $A$-anchored Lie algebras $\chi:\cA_A^R\big(\cA_R^S(S,\varpi_S)\big) \to \cA_A^S(S,\varpi_S)$ induced by the composition $\cA_A^R\big(\cA_R^S(S,\varpi_S)\big) \to \cA_R^S(S,\varpi_S) \to S$. The interested reader may check that the morphism $\cL_A(\varphi)$ coincides with the composition $\chi\circ\cA_A^R\big(\tilde{\varphi}\big)$. However, we believe that the elementary proof we gave of Theorem \ref{thm:cLfuncts} is more straightforward.
\end{remark}

Now assume that an $A$-anchored Lie algebra $(L,\omega)$ has been given. By the universal property of $U(L)$ there exists a unique $\K$-algebra extension $\Omega:U(L) \to \End{\K}{A}$ of $\omega$ which makes of $A$ a $U(L)$-module algebra with
\begin{equation}\label{eq:Umodalg}
X \cdot a = \omega(X)(a), \qquad X \cdot 1_A = 0 \qquad \text{and} \qquad u \cdot a = \Omega(u)(a)
\end{equation}
for all $X \in L$, $u \in U(L)$ and $a \in A$ (see \cite[Example 6.1.13(3)]{dascalescu}, for instance). As a consequence, we may consider the $A$-ring $A~\#~U(L)$ with underlying vector space $A \otimes U(L)$, unit $1_A \otimes 1_U$, multiplication uniquely determined by
\begin{equation}\label{eq:smash}
(a \otimes u)(b \otimes v) = \sum a(u_1\cdot b) \otimes u_2v
\end{equation}
for all $a,b \in A$, $u,v \in U(L)$, and $A$-ring structure $j_A:A \to A~\#~U(L), a \mapsto a \otimes 1_U$.

\begin{theorem}\label{thm:adj1}
The assignment $(L,\omega) \mapsto A~\#~U(L)$ induces a functor $\cU_A:\ALie{A} \to \Ring_A$ which is left adjoint to $\cL_A: \Ring_A \to \ALie{A}$.
\end{theorem}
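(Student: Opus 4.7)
The plan is twofold: first establish the functoriality of $\cU_A$, then construct a bijection
\[\Ring_A\big(\cU_A(L,\omega),(R,\phi)\big) \,\cong\, \ALie{A}\big((L,\omega),\cL_A(R,\phi)\big)\]
natural in both variables, which by \cite[III.1, Theorem 2(i)]{MacLane} will yield the claimed adjunction. For functoriality, given a morphism $f\colon(L,\omega)\to(L',\omega')$ in $\ALie{A}$, the universal property of $U(L)$ as a Lie algebra extends $f$ to a Hopf algebra morphism $U(f)\colon U(L)\to U(L')$. Since $\omega=\omega'\circ f$, the $U(L)$-module algebra structure on $A$ defined in \eqref{eq:Umodalg} factors through $U(f)$, so that $\cU_A(f)\coloneqq\id_A\otimes U(f)\colon A~\#~U(L)\to A~\#~U(L')$ is multiplicative for the smash product \eqref{eq:smash} and is compatible with the $A$-ring structures $j_A$.

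To establish the adjunction I would exhibit the unit. Define
\[\eta_L\colon L\to\cL_A\big(A~\#~U(L)\big),\qquad X\mapsto\big(1_A\otimes X,\omega(X)\big).\]
That $\eta_L(X)$ indeed lies in $\cL_A(A~\#~U(L))$ amounts to the identity $[1_A\otimes X,\,a\otimes 1_U]=\omega(X)(a)\otimes 1_U$ in the smash product, which is immediate from $\Delta(X)=X\otimes 1_U+1_U\otimes X$ together with \eqref{eq:smash}. Since $\eta_L$ is plainly a Lie algebra map with $p_2\circ\eta_L=\omega$, it is a morphism of $A$-anchored Lie algebras. Given an $A$-ring $(R,\phi)$ and a morphism $g\colon L\to\cL_A(R,\phi)$ in $\ALie{A}$, necessarily of the form $g(X)=\big(g_1(X),\omega(X)\big)$ with $g_1\colon L\to R$ a Lie algebra map satisfying $[g_1(X),\phi(a)]=\phi(\omega(X)(a))$, I would extend $g_1$ to a $\K$-algebra morphism $\tilde{g}_1\colon U(L)\to R$ via the universal property of $U(L)$ and set
\[\Phi\colon A~\#~U(L)\to R,\qquad a\otimes u\,\mapsto\, \phi(a)\tilde{g}_1(u).\]
Granting multiplicativity of $\Phi$, one has $\Phi\circ j_A=\phi$ and $\cL_A(\Phi)\circ\eta_L=g$ by construction, and uniqueness of such a $\Phi$ follows since $A~\#~U(L)$ is generated as a $\K$-algebra by $j_A(A)$ and $1_A\otimes L$.

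The main technical step, and the principal obstacle, is to check multiplicativity of $\Phi$ with respect to \eqref{eq:smash}. This reduces to the key identity
\[\tilde{g}_1(u)\phi(b)=\sum\phi(u_{(1)}\cdot b)\,\tilde{g}_1(u_{(2)})\]
in $R$, for all $u\in U(L)$ and $b\in A$. For $u=X\in L$ this is exactly the hypothesis $[g_1(X),\phi(b)]=\phi(\omega(X)(b))$ rewritten via $\Delta(X)=X\otimes 1_U+1_U\otimes X$, and the case $u=1_U$ is trivial. The extension to arbitrary $u\in U(L)$ proceeds by induction along products in the generating set $L$, exploiting that $\Delta$ is an algebra morphism on $U(L)$ and that \eqref{eq:Umodalg} defines a genuine module-algebra action. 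This compatibility check, though elementary, is where the bulk of the bookkeeping with the Heyneman-Sweedler notation lies. Naturality of the resulting bijection in $(L,\omega)$ and in $(R,\phi)$ then follows routinely from the explicit formulas above and from the description \eqref{eq:indmap} of $\cL_A$ on morphisms, completing the argument.
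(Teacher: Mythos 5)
Your proposal is correct and follows essentially the same route as the paper: the same unit $\eta_L(X)=(1_A\otimes X,\omega(X))$, the same extension $\Phi(a\otimes u)=\phi(a)\tilde{g}_1(u)$, and the same inductive verification of multiplicativity from the commutator identity $[g_1(X),\phi(a)]=\phi(\omega(X)(a))$ (the paper delegates this induction to the proof of \cite[Theorem 2.9]{Saracco-anch}). The only cosmetic difference is that you spell out the functoriality of $\cU_A$ and the base case of the induction explicitly, which the paper leaves implicit.
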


\begin{proof}
Let $(L,\omega)$ be an $A$-anchored Lie algebra and set $\cU_A(L) \coloneqq A~\#~U(L)$. Consider the assignment 
$ %\begin{equation}\label{eq:iotaL}
j_L:L \to \cU_A(L), X \mapsto 1_A \otimes X.
$ %\end{equation}
In view of \eqref{eq:Umodalg} and of \eqref{eq:smash}, $j_L$ is a morphism of Lie algebras and
\[\big[j_L(X),j_A(a)\big] = \big[1_A \otimes X,a \otimes 1_U\big] = \omega(X)(a) \otimes 1_U = j_A\big(\omega(X)(a)\big)\]
 in $\cU_A(L)$, for all $X \in L$ and $a \in A$.
Therefore, $j_L$ and $\omega$ induce a $\K$-linear morphism
\begin{equation}\label{eq:unitAnch}
\eta_L:L \to \cL_A\big(\cU_A(L)\big), \qquad X \mapsto \big(\iota_L(X),\omega(X)\big),
\end{equation}
via the universal property of the pullback. It is easy to check that $\eta_L$ is of Lie algebras and that it is compatible with the anchors, thus it is a morphism of $A$-anchored Lie algebras. 
We claim that $\eta_L$ is a universal map from $L$ to $\cL_A$ in the sense of \cite[III.1, Definition]{MacLane}.
Assume then that $(R,\phi)$ is a $A$-ring and that $f:L \to \cL_A(R)$ is a morphism of $A$-anchored Lie algebras. By definition of $\cL_A(R)$, $f(X) = \big(\tilde{f}(X),\omega(X)\big)$ and 
\begin{equation}\label{eq:multiplicative}
\big[\tilde{f}(X),\phi(a)\big] = \varpi_R\big(\tilde{f}(X)\big)\big(\phi(a)\big) \stackrel{\eqref{eq:cL}}{=} \phi\big(\omega(X)(a)\big)
\end{equation}
for all $X \in L$ and $a \in A$, where $\tilde{f} \coloneqq p_1 \circ f$.
%We can consider the composition $\tilde{f} \coloneqq (p_1 \circ f) : L \to R$ which, being composition of Lie algebra maps, is a morphism of Lie algebras. By definition of $\cL_A(R)$, $f(X) = \big(\tilde{f}(X),\omega(X)\big)$ and
%\begin{equation}\label{eq:multiplicative}
%\big[\tilde{f}(X),\phi(a)\big] = \varpi_R\big(\tilde{f}(X)\big)\big(\phi(a)\big) \stackrel{\eqref{eq:cL}}{=} \phi\big(\omega(X)(a)\big)
%\end{equation}
As in the proof of \cite[Theorem 2.9]{Saracco-anch}, a straightforward check using \eqref{eq:multiplicative} and induction on a PBW basis of $U(L)$ shows that
\[F:A~\#~U(L) \to R, \qquad a \otimes u \mapsto \phi(a)U\big(\tilde{f}\big)(u),\]
is a morphism of $A$-rings which satisfies $F\big(\iota_L(X)\big) = \tilde{f}(X)$ for all $X \in L$.
Moreover,
\[\cL_A(F)\big(\eta_L(X)\big) \stackrel{\eqref{eq:unitAnch}}{=} \cL_A(F)\big(\iota_L(X),\omega(X)\big) \stackrel{\eqref{eq:indmapanch}}{=} \Big(F\big(\iota_L(X)\big),\omega(X)\Big) = \big(\tilde{f}(X),\omega(X)\big) = f(X)\]
for all $X \in L$ and $F$ is the unique $A$-ring map satisfying the latter relation. %(for if $F'$ was another, then $F'(a \otimes 1_U) = \phi(a) = F(a \otimes 1_U)$ and $F'(1_A \otimes X) = \tilde{f}(X) = F(1_A \otimes X)$ for all $a \in A$, $X \in L$). 
Therefore, $\cU_A,\cL_A$ form an adjoint pair by \cite[IV.1, Theorem 2(ii)]{MacLane}.
\end{proof}

Notice that Theorem \ref{thm:adj1} is expressing the fact that $\big(A ~\#~ U(L),j_A\big)$ is the universal enveloping $A$-ring of $(A,L,\omega)$. In \cite[page 175]{Jacobson}, the algebra $A ~\#~ U(L)$ is called the \emph{algebra of differential operators} of the representation $\omega$ of $L$ (our construction differs slightly from the one in \cite{Jacobson}, because of the different choice of sides for the modules). Theorem \ref{thm:adj1} provides then a conceptual explanation for the universal property of $A ~\#~ U(L)$ described in \cite[V.6, Proposition 2]{Jacobson} and a new proof of the latter.

\begin{remark}
For the sake of future reference, if $f : (L,\omega) \to (L',\omega')$ is a morphism of $A$-anchored Lie algebras, then the induced morphism of $A$-rings $\cU_A(f):\cU_A(L) \to \cU_A(L')$ is explicitly given by
$ %\[%\begin{equation}\label{eq:cUf}
\cU_A(f)(a \otimes u) = a \otimes U(f)(u)
$ %\]%\end{equation}
for all $a \in A$ and $u \in U(L)$.
\end{remark}

As a particular case of Theorem \ref{thm:adj1}, we have an adjunction
\[\xymatrix@=17pt{
\Ring_{\env{A}} \ar@/^2ex/@<+0.5ex>[d]^-{\cL_{\env{A}}} \\
\ALie{\env{A}}. \ar@/^2ex/@<+0.5ex>[u]^-{\cU_{\env{A}}}
}\]
If we compose it with the adjunction \eqref{eq:AeAadj}, we obtain a new adjunction
\begin{equation}\label{eq:CMadj}
\begin{gathered}
\xymatrix@=17pt{
\Ring_{\env{A}} \ar@/^2ex/@<+0.5ex>[d]^-{\sL_{A}} \\
\ALie{A}. \ar@/^2ex/@<+0.5ex>[u]^-{\sU_{A}}
}
\end{gathered}
\end{equation}
where $\sU_A \coloneqq \cU_{\env{A}} \circ \cE_{A}$ and $\sL_A \coloneqq \cF_A \circ \cL_{\env{A}}$. Notice that $\sU_A(L) = \env{A} ~\#~ U(L)$, where the $U(L)$-module structure on $\env{A}$ is that of a tensor product of $U(L)$-modules.

\begin{theorem}\label{thm:main}
There is an isomorphism of $\env{A}$-rings
\begin{equation}\label{eq:iso}
\env{A} ~\#~ U(L) \to A \odot U(L) \odot A, \qquad (a \otimes \op{b}) \otimes u \mapsto a \otimes u \otimes b.
\end{equation}
In particular, \eqref{eq:CMadj} exhibits the Connes-Moscovici's bialgebroid construction of \cite[\S2]{Saracco-anch} as a left adjoint functor. 
\end{theorem}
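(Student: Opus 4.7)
The underlying idea is to unwind the definition of $\sU_A = \cU_{\env{A}}\circ \cE_A$: explicitly, $\sU_A(L) = \env{A}\,\#\,U(L)$, where the $U(L)$-module algebra structure on $\env{A}$ is the algebra extension of $\omega_\otimes \colon L \to \Der_\K(\env{A})$ from Proposition \ref{prop:E}. The candidate map $\Phi \colon (a\otimes\op{b})\otimes u \mapsto a\otimes u\otimes b$ is visibly a $\K$-linear bijection, with inverse $a\otimes u\otimes b \mapsto (a\otimes\op{b})\otimes u$, and it is compatible with the two $\env{A}$-ring structures since $\Phi\big((a\otimes\op{b})\otimes 1_U\big) = a\otimes 1_U\otimes b = J_A(a\otimes\op{b})$. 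The only substantive point is therefore the multiplicativity of $\Phi$.

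For that, the key observation is that the $U(L)$-action on $\env{A}$ extending the derivation action from Lemma \ref{lem:ders} coincides with the tensor-product $U(L)$-module structure on $A\otimes\op{A}$, namely
\[
u\cdot (a\otimes\op{b}) = \sum (u_{(1)}\cdot a)\otimes\op{u_{(2)}\cdot b}
\]
for every $u\in U(L)$; this holds on primitive elements by Lemma \ref{lem:ders} and Proposition \ref{prop:E}, hence on all of $U(L)$ by multiplicativity of the action. Expanding the smash-product multiplication \eqref{eq:smash} with the componentwise multiplication of $\env{A}$ and then applying $\Phi$ produces
\[
\sum a(u_1\cdot a')\otimes u_3 v\otimes (u_2\cdot b')\,b,
\]
while the Connes-Moscovici product in $A\odot U(L)\odot A$ gives
\[
\sum a(u_1\cdot a')\otimes u_2 v\otimes (u_3\cdot b')\,b,
\]
where $\Delta^{(2)}(u)=\sum u_1\otimes u_2\otimes u_3$ in both expressions. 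The two sums agree by the cocommutativity of $U(L)$, which allows swapping the second and third tensorands of $\Delta^{(2)}(u)$. This reconciliation via cocommutativity is where I expect the main (mild) obstacle to lie, since it is the single point where the proof is not a purely formal unwinding of definitions.

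For the final assertion, $\sU_A$ is left adjoint to $\sL_A$ as a composition of the two left adjoints $\cE_A \dashv \cF_A$ and $\cU_{\env{A}} \dashv \cL_{\env{A}}$ provided by Theorems \ref{thm:AeAadj} and \ref{thm:adj1}. The family $\Phi$ is patently natural in $(L,\omega)$, since both $\sU_A(f)$ and the morphism induced on the Connes-Moscovici side act as $\id\otimes U(f)\otimes\id$ under the identification. The resulting natural isomorphism between $\sU_A$ and the functor $(L,\omega)\mapsto A\odot U(L)\odot A$ therefore transports the left-adjointness to the Connes-Moscovici construction, which is the desired conclusion.
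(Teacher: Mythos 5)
Your proposal is correct and follows the same route as the paper: the paper's proof likewise identifies the $U(L)$-action on $\env{A}$ as the diagonal (tensor-product) action $u\cdot(a\otimes\op{b})=\sum(u_1\cdot a)\otimes\op{(u_2\cdot b)}$ and invokes cocommutativity of $U(L)$ to verify that \eqref{eq:iso} is a morphism of $\env{A}$-rings, leaving the adjointness of $\sU_A=\cU_{\env{A}}\circ\cE_A$ to the composition of the adjunctions already established. Your write-up merely makes explicit the computation the paper calls ``straightforward,'' including the precise point where the second and third Sweedler components must be swapped.
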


\begin{proof}
A straightforward computation by means of the cocommutativity of $U(L)$ shows that \eqref{eq:iso} is, in fact, a morphism of $\env{A}$-rings, where the $U(L)$-module structure on $\env{A}$ is given by the diagonal action $u \cdot (a \otimes \op{b}) = \sum (u_1\cdot a) \otimes \op{(u_2 \cdot b)}$ for all $u \in U(L)$, $a,b\in A$. 
%Therefore, \eqref{eq:CMadj} exhibits the Connes-Moscovici's bialgebroid construction as left adjoint functor to $\sL_A$. 
\end{proof}

The universal property of \cite[Theorem 2.9]{Saracco-anch} (see \S\ref{ssec:Aanch}) expresses exactly the fact that for any morphism of $A$-anchored Lie algebras $L \to \sL_A(R)$, there exists a unique morphism of $\env{A}$-rings $A \odot U(L) \odot A \to R$ extending it, as the following proposition states.

\begin{proposition}\label{prop:univprop}
For any $\env{A}$-ring $(R,\phi_A)$, the $A$-anchored Lie algebra $\sL_A(R)$ can be realized as the following pullback of $\K$-vector spaces
\[
\xymatrix@=14pt{
\sL_A(R) \ar@{}[ddr]|(0.2){\Big\lrcorner} \ar[rr]^-{\rho_1} \ar[dd]_-{\rho_2} & & R \ar[d]^-{\varpi_R} \\
 & & \Der_\K(R) \ar[d]^-{{\phi_A}^*} \\
\Der_\K(A) \ar[r]_-{e} & \Derk{\env{A}} \ar[r]_-{{\phi_A}_*} & \Der_\K(\env{A},R)
}
\]
with component-wise bracket and anchor $\rho_2$. Concretely,
\[
\sL_A(R) = \Big\{\left(r,\delta\right) \in R \times \Derk{A} ~\big|~ \big[r,\phi_A(a \otimes \op{b})\big] = \phi_A\big(\delta(a) \otimes \op{b} + a \otimes \op{\delta(b)}\big), \forall a,b \in A\Big\}.
\]
The datum of a morphism of $A$-anchored Lie algebras $L \to \sL_A(R)$ is therefore equivalent to the datum of a morphism of Lie algebras $\phi_L:L \to R$ such that for all $X \in L$, $a,b \in A$,
\[\big[\phi_L(X),\phi_A(a \otimes \op{b})\big] = \phi_A\big(X\cdot(a \otimes \op{b})\big).\]
\end{proposition}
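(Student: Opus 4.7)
The plan is to unfold the composite $\sL_A = \cF_A \circ \cL_{\env{A}}$ and paste the two pullbacks defining its factors. First, I would apply Lemma \ref{lem:keyAnch}, with $A$ replaced by $\env{A}$ and with the $R$-anchored Lie algebra $(R,\varpi_R)$, to describe $\cL_{\env{A}}(R) = \cA_{\env{A}}^R(R,\varpi_R)$ concretely via \eqref{eq:cL} as the set of pairs $(r,D) \in R \times \Derk{\env{A}}$ satisfying $[r,\phi_A(z)] = \phi_A(D(z))$ for every $z \in \env{A}$, with $\env{A}$-anchor the projection onto $D$.

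Next I would invoke Theorem \ref{thm:AeAadj} together with Proposition \ref{prop:prod} to view $\sL_A(R) = \cF_A(\cL_{\env{A}}(R))$ as the pullback in $\ALie{\env{A}}$ of this anchor along $e:\Derk{A}\to\Derk{\env{A}}$. Because algebraic forgetful functors create limits (so pullbacks may be computed in $\vectk$) and pullbacks paste, the resulting object fits exactly inside the large rectangle displayed in the statement, and sits over $\Derk{A}$ via $\rho_2$. The compatibility condition in the second pullback forces $D = e(\delta) = \delta_\otimes$, so a triple $((r,D),\delta)$ collapses to a pair $(r,\delta)\in R\times\Derk{A}$. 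By Lemma \ref{lem:ders}, the equation $[r,\phi_A(z)] = \phi_A(\delta_\otimes(z))$ on every $z\in\env{A}$ reduces, by $\K$-linearity, to its value on simple tensors $z = a \otimes \op{b}$, which is exactly the claimed commutator identity. The component-wise bracket and the anchor $\rho_2$ are inherited from the two constituent pullbacks.

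For the last equivalence, a morphism $f: (L,\omega) \to \sL_A(R)$ in $\ALie{A}$ must satisfy $\rho_2 \circ f = \omega$, so its second component is forced to be $\omega$ itself. Writing $f(X) = (\phi_L(X),\omega(X))$, the condition $f(X) \in \sL_A(R)$ reads precisely as the displayed commutator identity with $X\cdot(a\otimes\op{b}) = \omega(X)(a)\otimes\op{b} + a\otimes\op{\omega(X)(b)}$. Since the bracket on $\sL_A(R)$ is component-wise and $\omega$ is already a Lie algebra morphism, the Lie-morphism condition on $f$ reduces to the same condition on $\phi_L$.

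The main obstacle I expect is the second step: correctly performing the pasting of the two pullbacks and translating the single condition $[r,\phi_A(z)] = \phi_A(D(z))$ ranging over all of $\env{A}$ into the two-term Leibniz-type relation on simple tensors via the explicit formula for $\delta_\otimes$. Once this reduction is verified, the remaining bracket and anchor identifications, as well as the concluding equivalence of data, follow by straightforward unwinding.
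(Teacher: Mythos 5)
Your proposal is correct and follows the same route as the paper: the first claim is exactly the pasting law applied to the two defining pullbacks of $\cL_{\env{A}}(R)$ and $\cF_A$, with the identification $D=\delta_\otimes$ collapsing triples to pairs, and the second claim is the straightforward unwinding you describe. The paper's proof is just a one-line citation of the pasting law plus "straightforward check," so your write-up is a faithful (and more detailed) expansion of it.
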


\begin{proof}
The first claim follows from the pasting law for pullbacks. The second claim is a straightforward check.
\end{proof}

If we define an $(A,L,\omega)$-module to be an $A$-bimodule together with a Lie algebra morphism $\rho: L \to \End{\K}{M}$ such that
\begin{equation}\label{eq:rho}
\rho(X)(a\cdot m \cdot b) = \omega(X)(a) \cdot m\cdot b + a\cdot \rho(X)(m) \cdot b + a \cdot m \cdot \omega(X)(b)
\end{equation}
as in \cite[Corollary 2.10]{Saracco-anch}, then we have the following expected result.

\begin{proposition}\label{prop:modsanch}
For an $A$-bimodule $M$, the datum of a left $(A,L,\omega)$-module structure is equivalent to the datum of a morphism $L \to \sL_A\big(\End{\K}{M}\big)$ of $A$-anchored Lie algebras.
\end{proposition}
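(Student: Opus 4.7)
The plan is to unpack both sides of the claimed equivalence and observe that they are defined by the very same identity, once $\sL_A(\End_\K(M))$ is described via the concrete pullback formula from Proposition \ref{prop:univprop}.

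First, I would identify the $\env{A}$-ring structure on $\End_\K(M)$: the bimodule action gives a $\K$-algebra map
\[\phi_A : \env{A} \longrightarrow \End_\K(M), \qquad (a \otimes \op{b})(m) = a\cdot m \cdot b,\]
so that $\End_\K(M)$ lies in $\Ring_{\env{A}}$. By Proposition \ref{prop:univprop}, $\sL_A(\End_\K(M))$ then consists of pairs $(f,\delta) \in \End_\K(M)\times \Der_\K(A)$ with
\[[f,\phi_A(a\otimes\op{b})] = \phi_A\bigl(\delta(a)\otimes\op{b} + a\otimes\op{\delta(b)}\bigr)\]
for all $a,b\in A$; evaluating on $m \in M$, this is
\[f(a\cdot m\cdot b) - a\cdot f(m)\cdot b = \delta(a)\cdot m\cdot b + a\cdot m\cdot \delta(b). \tag{$\ast$}\]

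Next, I would unravel the two sides of the equivalence. A morphism of $A$-anchored Lie algebras $L \to \sL_A(\End_\K(M))$ is a Lie algebra map $X \mapsto (\rho(X),\delta_X)$ whose second component must coincide with $\omega$ by the anchor compatibility, so $\delta_X = \omega(X)$; the pullback condition $(\ast)$ applied to $\rho(X)$ then becomes exactly the identity \eqref{eq:rho}. Conversely, given $\rho : L \to \End_\K(M)$ satisfying \eqref{eq:rho}, the assignment $X \mapsto (\rho(X),\omega(X))$ lands in $\sL_A(\End_\K(M))$ by $(\ast)$, is $\K$-linear and respects brackets component-wise (since $\rho$ and $\omega$ do), and is compatible with the anchor by construction.

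The two assignments are visibly inverse to one another, hence the equivalence. The only step that is not entirely automatic is the translation $(\ast)\Leftrightarrow\eqref{eq:rho}$, but this is a direct evaluation on arbitrary $m \in M$ and uses nothing beyond the definition of $\phi_A$; no computations of Lie-bracket type, no universal property of $U(L)$, and no PBW input are required. I would conclude by remarking, as a small extra, that this re-proves \cite[Corollary 2.10]{Saracco-anch} in a more conceptual way, since the module condition is now revealed to be literally the $\sL_A$-compatibility at $\End_\K(M)$.
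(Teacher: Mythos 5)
Your proof is correct and follows essentially the same route as the paper: both identify the $\env{A}$-ring structure $a\otimes\op{b}\mapsto l_a\circ r_b$ on $\End{\K}{M}$ and then invoke the pullback description of $\sL_A$ (Proposition \ref{prop:univprop}) to translate the membership condition into \eqref{eq:rho}. The only cosmetic difference is that you spell out the evaluation on $m\in M$ and the two inverse assignments explicitly, where the paper phrases the same translation as a single application of the universal property of the pullback.
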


\begin{proof}
Recall that if $M$ is an $A$-bimodule, then $\End{\K}{M}$ has a natural $\env{A}$-ring structure induced by left and right multiplication by $A$:
\[\phi:\env{A} \to \End{\K}{M}, \qquad a \otimes \op{b}\mapsto l_a \circ r_b,\]
where $l_a(m) \coloneqq a\cdot m$ and $r_a(m) \coloneqq m \cdot a$ for all $a \in A$, $m \in M$.
By the universal property of the pullback, giving a morphism $\varrho : L \to \sL_A\big(\End{\K}{M}\big)$ of $A$-anchored Lie algebras is equivalent to giving a morphism of Lie algebras $\rho :L \to \End{\K}{M}$ such that $\phi^* \circ \varpi_{\End{\K}{M}} \circ \rho = \phi_* \circ e \circ \omega$,
% $\big(\rho(X),\omega(X)\big) \in \cA^{A'}_A(L',\omega')$ for all $X \in L$, 
which is exactly \eqref{eq:rho}.
% means exactly that
% \[\phi\big(\omega(X)(a)\big) = \omega'\big(\psi(X)\big)\big(\phi(a)\big)\]
% for all $a \in A$.
\end{proof}

As a consequence, Theorem \ref{thm:main} and Proposition \ref{prop:modsanch} provide a conceptual proof of the equivalence between the category of $(A,L,\omega)$-modules and the category of $\sU_A(L)$-modules already observed in \cite[Corollary 2.10]{Saracco-anch}.

%As a consequence, we are justified in calling $\sU_A(L)$ the \emph{universal enveloping $\env{A}$-ring} of the $A$-anchored Lie algebra $(L,\omega)$.

%We conclude this section with a short remark concerning morphisms of Lie algebras which are anchored over different bases. 

% --------------------------------------------------------------------------------------------------------------------------------- %
% LEFT ADJOINT LIE-RINEHART
% --------------------------------------------------------------------------------------------------------------------------------- %

\section{The universal enveloping $A$-ring as a left adjoint functor}\label{sec:LRalg}

Henceforth, $A$ is a commutative $\K$-algebra. Notice that if a morphism of $\K$-algebras $\phi:A \to R$ has been given, then $\Der_{\K}(A,R)$ becomes a left $A$-module with $A$-action $(a\cdot f)(b) \coloneqq \phi(a)f(b)$ for all $a,b \in A$, $f \in \Der_{\K}(A,R)$. 
%Define $\cL_A(M,\varpi)$ to be the pullback
%\[
%\xymatrix{
%\cL_A(M,\varpi) \ar@{}[ddr]|(0.2){\Big\lrcorner} \ar[r]^-{p_1} \ar[dd]_-{p_2} & M \ar[d]^-{\varpi} \\
 %& \Der_\K(R) \ar[d]^-{\phi^*} \\
%\Der_\K(A) \ar[r]_-{\phi_*} & \Der_\K(A,R)
%}
%\]
%computed in the category of $\K$-vector spaces. Concretely,
%\begin{equation}\label{eq:cL}
%\cL_A(M,\varpi) = \Big\{\left(m,\delta\right) \in M \times \Der_\K(A) ~\big|~ \varpi(m)\big(\phi(a)\big) = \phi\big(\delta(a)\big) \text{ for all }a \in A, m\in M\Big\}.
%\end{equation}

\begin{proposition}\label{prop:key}
Let $(R,\phi)$ be an $A$-ring. The $\K$-vector space $\cA_A^R(R,\varpi_R)$ of \eqref{eq:cL} is a Lie-Rinehart algebra over $A$ with anchor $p_2:\cA_A^R(R,\varpi_R) \to \Der_\K(A)$ and with component-wise bracket and left $A$-action. Furthermore, $p_1$ is a left $A$-linear and Lie algebra morphism.
\end{proposition}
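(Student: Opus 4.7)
The plan is to upgrade the $A$-anchored Lie algebra structure on $\cA_A^R(R,\varpi_R)$ already produced in Lemma \ref{lem:keyAnch} to a full Lie-Rinehart structure by equipping it with the obvious component-wise left $A$-action
\[
a\cdot (r,\delta) \coloneqq \bigl(\phi(a)r,\, a\delta\bigr),
\]
where $a\delta$ is the derivation $b \mapsto a\delta(b)$ (well-defined since $A$ is commutative, so left multiplication by $a$ preserves $\Derk{A}$).

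First I would check that this assignment actually lands in the pullback $\cA_A^R(R,\varpi_R)$ described by \eqref{eq:cL}. The computation uses commutativity of $A$ in a crucial way: for $b \in A$, one has
\[
[\phi(a)r,\phi(b)] = \phi(a)[r,\phi(b)] + [\phi(a),\phi(b)]r = \phi(a)\phi(\delta(b)) = \phi\bigl((a\delta)(b)\bigr),
\]
because $[\phi(a),\phi(b)] = 0$. Then I would verify routinely that this defines a unital associative $A$-module structure on $\cA_A^R(R,\varpi_R)$ and that the projection $p_2$ is left $A$-linear by construction, while $p_1$ is left $A$-linear because the $A$-action on $R$ is via $\phi$ (i.e.\ $p_1(a\cdot(r,\delta)) = \phi(a)r = a\cdot p_1(r,\delta)$). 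That $p_1$ is a Lie algebra morphism is already recorded in Lemma \ref{lem:keyAnch}.

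The core of the proof is to verify the two Leibniz identities \eqref{eq:Leibniz}. Compatibility of the anchor with the action, $p_2(a\cdot(r,\delta)) = a\cdot p_2(r,\delta)$, is immediate. For the bracket identity, I would compute the two components of $\bigl[(r,\delta), a\cdot(r',\delta')\bigr]$ separately. On the $R$-side,
\[
[r,\phi(a)r'] = [r,\phi(a)]r' + \phi(a)[r,r'] = \phi(\delta(a))r' + \phi(a)[r,r'],
\]
which matches the $R$-component of $a\cdot[(r,\delta),(r',\delta')] + \delta(a)\cdot(r',\delta')$. On the $\Derk{A}$-side, the standard computation
\[
[\delta,a\delta'](b) = \delta(a\delta'(b)) - a\delta'(\delta(b)) = \delta(a)\delta'(b) + a[\delta,\delta'](b)
\]
gives exactly $[\delta,a\delta'] = a[\delta,\delta'] + \delta(a)\delta'$, matching the $\Derk{A}$-component.

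I do not expect any serious obstacle: the whole argument is component-wise and the only place where commutativity of $A$ genuinely intervenes is in showing that $(\phi(a)r, a\delta)$ still satisfies the pullback condition. Everything else is either a direct consequence of the Lie-Rinehart structure on $\bigl(R,\varpi_R\bigr)$ viewed through $\phi$ (for the first component) or of the standard Lie-Rinehart structure on $\Derk{A}$ (for the second component), so the proof reduces to checking that these two compatible structures glue along the pullback \eqref{eq:diagram}.
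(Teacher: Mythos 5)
Your proof is correct and follows essentially the same route as the paper: it builds on Lemma \ref{lem:keyAnch} and Corollary \ref{cor:RAanch}, endows the pullback with the component-wise left $A$-action, and verifies the Leibniz rule \eqref{eq:Leibniz} by the same component-wise computation (the paper does it in a single chain of equalities rather than separating the $R$- and $\Derk{A}$-components). The only difference is that you explicitly check that $\bigl(\phi(a)r,\,a\delta\bigr)$ still satisfies the pullback condition \eqref{eq:cL}, a point the paper subsumes under the remark that \eqref{eq:diagram} is a diagram of left $A$-modules and left $A$-linear maps; your explicit verification is a harmless (and arguably clearer) substitute.
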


\begin{proof}
We already know from Lemma \ref{lem:keyAnch} and Corollary \ref{cor:RAanch} that $\cA_A^R(R,\varpi_R)$ is an $A$-anchored Lie algebra with component-wise bracket and anchor $p_2$ and we know that $p_1$ is a Lie algebra morphism. We only need to check the $A$-module properties. Since, in this case, \eqref{eq:diagram} is also a diagram of left $A$-modules and left $A$-linear morphisms, $\cA_A^R(R,\varpi_R)$ is a left $A$-module itself with component-wise $A$-action and $p_1,p_2$ are left $A$-linear.
%Let $(r,\delta), (r',\delta') \in \cA_A^R(R ,\varpi_R)$ and $a \in A$. Since
%\[
%\big[\phi(a)r,\phi(b)\big] = \phi(a)r\phi(b) - \phi(b)\phi(a)r \stackrel{(*)}{=} \phi(a)\big(r\phi(b) - \phi(b)r\big) \stackrel{\eqref{eq:cL}}{=} \phi(a)\phi\big(\delta(b)\big) = \phi\big(a\delta(b)\big)
%\]
%for all $b \in A$, where $(*)$ follows by commutativity of $A$, we have that $(\phi(a)r,a\cdot\delta) \in \cA_A^R(R ,\varpi_R)$, which shows that $\cA_A^R(R ,\varpi_R)$ is a left $A$-module. 
%%Since
%%\begin{align*}
%%\big[[r,r'],\phi(b)\big] & = rr'\phi(b) - r'r\phi(b) - \phi(b)rr' + \phi(b)r'r \\
 %%& \stackrel{\eqref{eq:cL}}{=} r\phi\big(\delta'(b)\big) - \phi\big(\delta'(b)\big)r + \phi\big(\delta(b)\big)r' - r'\phi\big(\delta(b)\big) \\
 %%& \stackrel{\eqref{eq:cL}}{=} \phi\Big(\delta\big(\delta'(b)\big)\Big) - \phi\Big(\delta'\big(\delta(b)\big)\Big) = \phi\Big([\delta,\delta'](b)\Big)
%%\end{align*}
%%for all $b \in A$, also $([r,r'],[\delta,\delta']) \in \cA_A^R(R ,\varpi_R)$, which shows that $\cA_A^R(R ,\varpi_R)$ is a Lie algebra. 
%Clearly, $p_1$ and $p_2$ are morphisms of left $A$-modules with respect to this structure and 
Moreover
\begin{align*}
\Big[(r,\delta),a\cdot(r',\delta')\Big] & \stackrel{\phantom{(11)}}{=} \Big[(r,\delta),(\phi(a)r',a\cdot \delta')\Big] = \Big(r\phi(a)r'- \phi(a)r'r, \big[\delta,a\cdot \delta'\big]\Big) \\
 & \stackrel{\eqref{eq:cL}}{=} \Big(\phi(a)[r,r'] + \phi\big(\delta(a)\big)r', a\cdot\big[\delta,\delta'\big] + \delta(a)\cdot \delta'\Big) \\
 & \stackrel{\phantom{(11)}}{=} a\cdot \big([r,r'],[\delta,\delta']\big) + \delta(a)\cdot \big(r', \delta'\big) \\
 & \stackrel{\phantom{(11)}}{=} a \cdot \big[(r,\delta),(r',\delta')\big] + p_2\big((r,\delta)\big)(a)\cdot (r',\delta')
\end{align*}
for all $a \in A$, $r,r'\in R$, $ \delta,\delta'\in \Derk{A}$, which entails that the Leibniz rule \eqref{eq:Leibniz} is satisfied and hence that $\cA_A^R(R ,\varpi_R)$ is a Lie-Rinehart algebra over $A$.
\end{proof}

\begin{theorem}\label{thm:cLfunctor}
The assignment $R \mapsto \cA_A^R(R ,\varpi_R)$ induces a functor $\cL_A:\Ring_A \to \LieRin_A$.
\end{theorem}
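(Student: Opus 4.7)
The plan is to build on what is already in place. Proposition \ref{prop:key} upgrades each object $\cA_A^R(R,\varpi_R)$ from a mere $A$-anchored Lie algebra to a Lie-Rinehart algebra over $A$, so the assignment on objects is already well-defined with values in $\LieRin_A$. For the action on morphisms, I would recycle the construction from the proof of Theorem \ref{thm:cLfuncts}: given a morphism of $A$-rings $\varphi:(R,\phi)\to(S,\psi)$, the universal property of the pullback in $\vectk$ provides the unique $\K$-linear map
\[
\cL_A(\varphi):\cA_A^R(R,\varpi_R) \to \cA_A^S(S,\varpi_S), \qquad (r,\delta) \mapsto \big(\varphi(r),\delta\big),
\]
exactly as in \eqref{eq:indmap}, and we already know from Theorem \ref{thm:cLfuncts} that this is a morphism of Lie algebras compatible with the anchors.

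The only genuinely new content is then to verify that $\cL_A(\varphi)$ is left $A$-linear, so that it upgrades to a morphism in $\LieRin_A$. Since $\varphi$ is a morphism of $A$-rings, $\varphi\circ\phi=\psi$, whence for all $a\in A$, $r\in R$ and $\delta\in\Derk{A}$,
\[
\cL_A(\varphi)\big(a\cdot(r,\delta)\big) = \cL_A(\varphi)\big(\phi(a)r,a\cdot\delta\big) = \big(\varphi(\phi(a)r),a\cdot\delta\big) = \big(\psi(a)\varphi(r),a\cdot\delta\big) = a\cdot\cL_A(\varphi)(r,\delta),
\]
using the component-wise $A$-action described in Proposition \ref{prop:key}. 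Combined with the Lie algebra and anchor compatibilities from Theorem \ref{thm:cLfuncts}, this shows $\cL_A(\varphi)$ is a morphism of Lie-Rinehart algebras over $A$.

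Functoriality is then immediate: since $\cL_A(\varphi)$ is characterized by a universal property of the pullback (uniqueness of the factorization through $\cA_A^S(S,\varpi_S)$), both preservation of identities and compatibility with composition follow by the standard uniqueness argument, exactly as in the conclusion of the proof of Theorem \ref{thm:cLfuncts}. I do not anticipate any real obstacle: the one point that could in principle fail, namely left $A$-linearity, is precisely the property secured by the hypothesis that $\varphi$ is a morphism of $A$-rings (as opposed to a mere $\K$-algebra morphism).
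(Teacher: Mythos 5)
Your proposal is correct and follows essentially the same route as the paper, whose proof of this theorem is simply the citation of Theorem \ref{thm:cLfuncts} and Proposition \ref{prop:key}; you have merely made explicit the one remaining verification (left $A$-linearity of $\cL_A(\varphi)$, using $\varphi\circ\phi=\psi$) that the paper leaves to the reader.
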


\begin{proof}
It follows from Theorem \ref{thm:cLfuncts} and Proposition \ref{prop:key}.
%We already know how $\cL_A$ acts on objects. To see how it acts on morphism, notice that it $\varphi : (R,\phi) \to (S,\psi)$ is a morphism of $A$-rings then the right-most square and the lowest triangle in the following diagram commute:
%\[
%\xymatrix{
%\cL_A\big(R ,\varpi_R\big) \ar@{}[ddr]|(0.2){\Big\lrcorner} \ar[r]^-{p_1} \ar[dd]_-{p_2} & R  \ar[d]^-{\varpi_R} \ar[dr]^-{\varphi} & \\
 %& \Der_\K(R) \ar[d]^-{\phi^*} & S \ar[d]^-{\varpi_S} \\
%\Der_\K(A) \ar[r]_-{\phi_*} \ar@/_4ex/[drr]_-{\psi_*} & \Der_\K(A,R) \ar[dr]^-{\varphi_*} & \Der_\K(S) \ar[d]^-{\psi^*} \\
 %& & \Der_\K(A,S),
%}
%\]
%whence, by the universal property of the pullback, there exists a unique morphism of $\K$-vector spaces $\cL_A(\varphi):\cL_A\big(R ,\varpi_R\big) \to \cL_A\big(S,\varpi_S\big)$, which is explicitly given by
%\begin{equation}\label{eq:indmap}
%\cL_A(\varphi) : (r,\delta) \mapsto (f(r),\delta).
%\end{equation}
%Since $\varphi$ is a $\K$-algebra morphism satisfying $\varphi\circ \phi = \psi$, it follows that $\cL_A(\varphi)$ is a Lie algebra and left $A$-linear morphism. The compatibility with the anchors is clear. Since on arrows $\cL_A$ is defined in terms of a universal property, it is functorial and so we have a well-defined functor $\cL_A:\Ring_A \to \LieRin_A$.
\end{proof}

Concretely,
$%\[
\cL_A(R) = \Big\{\left(r,\delta\right) \in R \times \Derk{A} ~\big|~ \big[r,\phi_A(a)\big] = \phi_A\big(\delta(a)\big) \text{ for all } a \in A\Big\}.
$%\]

%For the sake of simplicity, henceforth we will write $\cL_A(R)$ instead of $\cL_A(R,\varpi_R)$, if we do not need to stress the presence of $\varpi_R$.

The following proposition, analogue of Proposition \ref{prop:univprop}, argues in favour of the fact that $\cL_A$ provides a right adjoint for the functor $\cU_A : \LieRin_A \to \Ring_A$ of \S\ref{ssec:LRalg}.

\begin{proposition}\label{prop:adj}
Let $(R,\phi_A)$ be an $A$-ring and let $(L,\omega)$ be a Lie-Rinehart algebra over $A$. Then the datum of a morphism $\psi_L:L \to \cL_A(R)$ of Lie-Rinehart algebras over $A$ is equivalent to the datum of a morphism of Lie algebras $\phi_L:L \to R $ such that \eqref{eq:UEA} hold.
\end{proposition}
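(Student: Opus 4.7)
The plan is to unpack both directions of the correspondence via the universal property of the pullback defining $\cL_A(R)$, and to match data component by component. The key observation is that, by Proposition \ref{prop:key}, an element of $\cL_A(R)$ is a pair $(r,\delta)\in R\times\Der_\K(A)$ subject to the commutator condition $[r,\phi_A(a)]=\phi_A(\delta(a))$, with anchor given by $p_2$ and with the $A$-module action $a\cdot(r,\delta)=(\phi_A(a)r,\,a\cdot\delta)$.

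For the forward direction, I would start from a morphism $\psi_L:L\to\cL_A(R)$ of Lie-Rinehart algebras over $A$ and set $\phi_L\coloneqq p_1\circ\psi_L$. Since $\psi_L$ is compatible with anchors, $p_2\circ\psi_L=\omega$, so that $\psi_L(X)=(\phi_L(X),\omega(X))$ for every $X\in L$. The fact that $\psi_L(X)$ lies in the pullback $\cL_A(R)$ then reads precisely as the second identity in \eqref{eq:UEA}. Being $A$-linear, $\psi_L(a\cdot X)=a\cdot\psi_L(X)=(\phi_A(a)\phi_L(X),\,a\cdot\omega(X))$, and comparing first components yields the first identity in \eqref{eq:UEA}. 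Finally, $\phi_L=p_1\circ\psi_L$ is a Lie algebra morphism as the composition of two such maps (using that $p_1$ is a Lie algebra morphism, cf.\ Proposition \ref{prop:key}).

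For the converse, given a Lie algebra morphism $\phi_L:L\to R$ satisfying \eqref{eq:UEA}, the second identity in \eqref{eq:UEA} says that the pair $(\phi_L(X),\omega(X))$ satisfies the defining condition of $\cL_A(R)$, so by the universal property of the pullback there is a unique $\K$-linear map
\[
\psi_L:L\to\cL_A(R),\qquad X\mapsto\bigl(\phi_L(X),\omega(X)\bigr),
\]
with $p_1\circ\psi_L=\phi_L$ and $p_2\circ\psi_L=\omega$. It only remains to check that $\psi_L$ is a morphism of Lie-Rinehart algebras over $A$: the bracket and $A$-module compatibilities are component-wise, hence can be verified on each component. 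The first component reduces to the fact that $\phi_L$ is a Lie algebra morphism and to the first identity in \eqref{eq:UEA}, while the second component reduces to the known fact that $\omega$ is a Lie algebra morphism and an anchor. Compatibility with the anchors is the identity $p_2\circ\psi_L=\omega$ by construction.

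There is really no main obstacle here: the statement is a direct unwinding of the pullback definition of $\cL_A(R)$, mirroring the argument for Proposition \ref{prop:univprop} in the non-commutative setting. The only point requiring a little care is tracking the left $A$-module structure on $\cL_A(R)$ (component-wise, with $A$ acting on $R$ through $\phi_A$) so as to match the Leibniz-type identity in the first line of \eqref{eq:UEA}; this is precisely the ingredient that Proposition \ref{prop:univprop} did not need and that distinguishes the commutative Lie-Rinehart setting from the anchored one.
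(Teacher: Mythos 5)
Your proposal is correct and follows essentially the same route as the paper, which proves the statement in one line by invoking the universal property of the pullback: a morphism of $A$-modules and Lie algebras $\psi_L$ with $p_2\circ\psi_L=\omega$ corresponds to a morphism of $A$-modules and Lie algebras $\phi_L:L\to R$ with ${\phi_A}^*\circ\varpi_R\circ\phi_L={\phi_A}_*\circ\omega$, which is exactly \eqref{eq:UEA}. Your version merely spells out both directions of this correspondence (including the correct identification of the $A$-linearity of $\phi_L$ with the first identity in \eqref{eq:UEA}), which the paper leaves implicit.
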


\begin{proof}
By the universal property of the pullback, the existence of a morphism of $A$-modules and of Lie algebras $\psi_L:L \to \cL_A(R)$ such that $p_2\circ \psi_L = \omega$ is equivalent to the existence of a morphism of $A$-modules and of Lie algebras $\phi_L : L \to R$ such that ${\phi_A}^* \circ \varpi_R \circ \phi_L = {\phi_A}_* \circ \omega$, which is exactly \eqref{eq:UEA}.
%Assume first that $\psi_L:L \to \cL_A(R)$ is of Lie-Rinehart algebras over $A$ and define $\phi_L \coloneqq p_1\circ \psi_L$. Being the composition of two left $A$-linear and Lie algebra morphisms, $\phi_L$ is left $A$-linear and of Lie algebras. Furthermore, 
%\[
%\big[\phi_L(X),\phi_A(a)\big] = \big[p_1\big(\psi_L(X)\big),\phi_A(a)\big] \stackrel{\eqref{eq:cL}}{=} \phi_A\Big(p_2\big(\psi_L(X)\big)(a)\Big) = \phi_A\big(\omega(X)(a)\big)
%\]
%because $\psi_L$ is compatible with the anchors. In the other direction, if we have $\phi_L: L \to R $ satisfying \eqref{eq:UEA}, then
%\[
%\xymatrix{
%L \ar[r]^-{\phi_L} \ar[dd]_-{\omega} & R  \ar[d]^-{\varpi} \\
 %& \Der_\K(R) \ar[d]^-{\phi^*} \\
%\Der_\K(A) \ar[r]_-{\phi_*} & \Der_\K(A,R)
%}
%\]
%commutes and hence there exists a unique $\K$-linear morphism $\psi_L : L \to \cL_A(R)$ such that $p_1 \circ \psi_L = \phi_L$ and $p_2 \circ \psi_L = \omega$, which is clearly compatible with the anchors, left $A$-linear and a Lie algebra map. The existence and uniqueness of the previous statement entails that the assignment $\psi_L \to p_1 \circ \psi_L$ is bijective, as claimed.
\end{proof}

%For the sake of clearness, denote by $\liek^{\dagger}\big(L,R \big)$ the Lie algebra morphisms $\phi_L:L \to R $ which satisfy \eqref{eq:UEA}. Proposition \ref{prop:adj} states that
%\begin{equation}\label{eq:halfadj}
%\LieRin_A\big(L,\cL_A(R)\big) \to \liek^{\dagger}\big(L,R \big), \qquad \psi_L \mapsto p_1 \circ \psi_L,
%\end{equation}
%is a bijective correspondence.

\begin{theorem}\label{thm:main2}
The functor $\cL_A:\Ring_A \to \LieRin_A$ is right adjoint to the universal enveloping algebra functor $\cU_A:\LieRin_A \to \Ring_A$.
\end{theorem}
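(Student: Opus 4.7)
The plan is to mimic the proof of Theorem \ref{thm:adj1}, exploiting the fact that Proposition \ref{prop:adj} has already translated the datum of a morphism of Lie-Rinehart algebras $L \to \cL_A(R)$ into exactly the universal data $(\phi_A, \phi_L)$ appearing in the definition of the universal enveloping algebra $\cU_A(L)$ recalled in \S\ref{ssec:LRalg}. Thus the bulk of the work has been done in setting up the two universal properties, and the adjunction will follow formally by constructing the unit.

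First, I would construct the unit $\eta_L : L \to \cL_A(\cU_A(L))$ at a Lie-Rinehart algebra $(L,\omega)$. The canonical maps $\iota_A : A \to \cU_A(L)$ and $\iota_L : L \to \cU_A(L)$ satisfy \eqref{eq:UEAm} by the very definition of $\cU_A(L)$. By Proposition \ref{prop:adj} applied to the pair $(\iota_A, \iota_L)$, they induce a morphism of Lie-Rinehart algebras over $A$
\[
\eta_L : L \to \cL_A\big(\cU_A(L)\big), \qquad X \mapsto \big(\iota_L(X), \omega(X)\big).
\]
Naturality of $\eta$ in $L$ is immediate from the functoriality of both $\cU_A$ and $\cL_A$ together with the explicit description \eqref{eq:indmap} of $\cL_A$ on morphisms.

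Next, I would verify that $\eta_L$ is a universal arrow from $L$ to $\cL_A$, in the sense of \cite[III.1, Definition]{MacLane}. Given an $A$-ring $(R,\phi)$ and a morphism $\psi : L \to \cL_A(R)$ in $\LieRin_A$, Proposition \ref{prop:adj} produces a Lie algebra morphism $\phi_L : L \to R$ satisfying \eqref{eq:UEA}, such that $\psi(X) = (\phi_L(X), \omega(X))$. The universal property of $\cU_A(L)$ recalled in \S\ref{ssec:LRalg} then yields a unique morphism of $A$-rings $\Phi : \cU_A(L) \to R$ with $\Phi \circ \iota_L = \phi_L$, and the explicit formula \eqref{eq:indmap} gives
\[
\cL_A(\Phi)\big(\eta_L(X)\big) = \big(\Phi(\iota_L(X)), \omega(X)\big) = \big(\phi_L(X), \omega(X)\big) = \psi(X)
\]
for all $X \in L$. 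Uniqueness of $\Phi$ with $\cL_A(\Phi) \circ \eta_L = \psi$ follows because any such $\Phi$ must satisfy $\Phi \circ \iota_L = \phi_L$, which already pins it down.

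The adjunction $\cU_A \dashv \cL_A$ then follows from \cite[IV.1, Theorem 2(ii)]{MacLane}. I do not anticipate a genuine obstacle here: the combinatorial content is packaged in Proposition \ref{prop:adj} on one side and in the universal property of $\cU_A(L)$ on the other, so the argument amounts to matching the two. The only point requiring a brief verification is that the $\Phi$ produced via the universal property of $\cU_A(L)$ satisfies $\cL_A(\Phi) \circ \eta_L = \psi$, and this is a direct consequence of the pullback description of $\cL_A$ on morphisms.
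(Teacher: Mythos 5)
Your proposal is correct and follows essentially the same route as the paper: construct the unit $\eta_L(X)=\big(\iota_L(X),\omega(X)\big)$ via the pullback, use Proposition \ref{prop:adj} to translate a morphism $L\to\cL_A(R)$ into the data \eqref{eq:UEA}, invoke the universal property of $\cU_A(L)$, and conclude by Mac Lane's characterization of adjunctions via universal arrows. The only cosmetic differences are that the paper writes out the naturality check explicitly and cites IV.1, Theorem 2(i) rather than 2(ii).
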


\begin{proof}
Let $(A,L,\omega)$ be a Lie-Rinehart algebra and consider the assignment
%\[\Ring_A\big(\cU_A(L),R\big) \to \liek^{\dagger}\big(L,R \big), \qquad \Phi \mapsto \Phi \circ \iota_L.\]
%The universal property of $\cU_A(L)$ entails that the latter one is a bijective correspondence. By joining it with \eqref{eq:halfadj} we find a bijective correspondence
%\[\Ring_A\big(\cU_A(L),R\big) \cong \LieRin_A\big(L,\cL_A\big(R ,\varpi_R\big)\big).\]
%We leave to the reader to check that it is natural in $L$ and $R$.
\begin{equation}\label{eq:unit}
\eta_L:L \to \cL_A\big(\cU_A(L)\big), \qquad X \mapsto \big(\iota_L(X),\omega(X)\big),
\end{equation}
induced by $\iota_L$ and $\omega$ via the universal property of the pullback in view of \eqref{eq:UEAm}. 
It is easy to check that $\eta_L$ is a morphism of Lie-Rinehart algebras over $A$.
% It is easy to check that it is of $A$-modules, of Lie algebras and compatible with the anchors, thus it is a morphism of Lie-Rinehart algebras over $A$.
Moreover, if $f:(L,\omega) \to (L',\omega')$ is a morphism of Lie-Rinehart algebras over $A$ and if $F:\cL_A\big(\cU_A(L)\big) \to \cL_A\big(\cU_A(L')\big)$ denotes the morphism induced by $\cU_A(f)$, then the fact that
\[F\big(\eta_L(X)\big) \stackrel{\eqref{eq:unit}}{=} F\big(\iota_L(X),\omega(X)\big) \stackrel{\eqref{eq:indmap}}{=} \Big(\cU_A(f)\big(\iota_L(X)\big),\omega(X)\Big) = \big(\iota_{L'}\big(f(X)\big),\omega'\big(f(X)\big)\big)\]
for all $X \in L$ entails that $F \circ \eta_L = \eta_{L'} \circ f$ and so the collection $\left\{\eta_L\mid L \in \LieRin_A\right\}$ defines a natural transformation $\eta: \id \to \cL_A\circ \cU_A$.

In view of Proposition \ref{prop:adj} and the universal property of $\cU_A(L)$, for any $A$-ring $(R,\phi_A)$ and any morphism $\psi_L:L \to \cL_A(R)$ of Lie-Rinehart algebras over $A$, there exists a unique morphism of $A$-rings $\Phi:\cU_A(L) \to R$ such that $\Phi\circ \iota_L = p_1 \circ \psi_L$. By a direct check
\[\cL_A(\Phi)\big(\eta_L(X)\big) \stackrel{\eqref{eq:unit}}{=} \cL_A(\Phi)\big(\iota_L(X),\omega(X)\big) \stackrel{\eqref{eq:indmap}}{=} \Big(\Phi\big(\iota_L(X)\big),\omega(X)\Big) = \psi_L(X)\]
for all $X \in L$ and $\Phi$ is the unique morphism of $A$-rings satisfying $\cL_A(\Phi) \circ \eta_L = \psi_L$. 
%If $\Psi:\cU_A(L) \to R$ is another $A$-ring map such that $\cL_A(\Phi) \circ \eta_L = \psi_L$ then $\Psi \circ \iota_L = p_1 \circ \psi_L$ and hence $\Psi = \Phi$ by the uniqueness above. 
It follows that $\eta_L$ is a universal map from $L$ to $\cL_A$ in the sense of \cite[III.1, Definition]{MacLane}, for every $(L,\omega) \in \LieRin_A$, and hence $\cU_A,\cL_A$ form an adjoint pair by \cite[IV.1, Theorem 2(i)]{MacLane}.
\end{proof}

% --------------------------------------------------------------------------------------------------------------------------------- %
% REINTERPRETING OLD RESULTS
% --------------------------------------------------------------------------------------------------------------------------------- %

\section{On morphisms, modules and the infinitesimal gauge algebra}\label{sec:applications}

We conclude with a few remarks concerning morphisms between Lie-Rinehart algebras over different bases, modules over Lie-Rinehart algebras and the infinitesimal gauge algebra $\mathrm{DO}(A,L,M)$ of an $A$-module $M$ with respect to $(A,L,\omega)$ described in \cite[page 72]{Huebschmann-Poisson}.

% --------------------------------------------------------------------------------------------------------------------------------- %
% MORPHISMS OVER DIFFERENT BASES
% --------------------------------------------------------------------------------------------------------------------------------- %

\subsection{Morphisms over different bases}
By mimicking the arguments used to prove Lemma \ref{lem:keyAnch} and Proposition \ref{prop:key}, one shows that the following result holds.

\begin{proposition}\label{prop:morph}
If $(L',\omega')$ is a Lie-Rinehart algebra over a commutative $\K$-algebra $A'$ and $\phi: A \to A'$ is a morphism of commutative $\K$-algebras, then $\cA_A^{A'}(L',\omega')$ is a Lie-Rinehart algebra over $A$.
\end{proposition}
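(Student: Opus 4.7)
The plan is to follow the same template as the proof of Proposition \ref{prop:key}, replacing the role played there by the $A'$-anchored Lie algebra $(A',\varpi_{A'})$ with the $A'$-anchored Lie algebra underlying $(L',\omega')$. Since a Lie-Rinehart algebra over $A'$ is in particular an $A'$-anchored Lie algebra, Lemma \ref{lem:keyAnch} (applied with the algebra morphism $\phi:A \to A'$ and the $A'$-anchored Lie algebra $(L',\omega')$) already tells us that $\cA_A^{A'}(L',\omega')$ is an $A$-anchored Lie algebra with anchor $p_2$ and component-wise bracket. What remains is to equip it with a left $A$-module structure, to verify stability under this action, and to establish the Leibniz rule \eqref{eq:Leibniz}.

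First I would equip $\cA_A^{A'}(L',\omega')$ with the component-wise action $a \cdot (X,\delta) \coloneqq \big(\phi(a) \cdot X,\, a \cdot \delta\big)$, where $\phi(a)\cdot X$ uses the $A'$-module structure on $L'$ and $(a\cdot \delta)(b) \coloneqq a\,\delta(b)$ is the standard $A$-action on $\Der_\K(A)$ (well-defined since $A$ is commutative). Stability of the pullback under this action follows from the $A'$-linearity of $\omega'$ encoded in \eqref{eq:Leibniz}: for $(X,\delta) \in \cA_A^{A'}(L',\omega')$ and $b \in A$,
\[
\omega'\big(\phi(a)\cdot X\big)\big(\phi(b)\big) = \phi(a)\,\omega'(X)\big(\phi(b)\big) = \phi(a)\phi\big(\delta(b)\big) = \phi\big((a\cdot\delta)(b)\big).
\]
With this action, the projections $p_1$ and $p_2$ are clearly left $A$-linear (with $L'$ viewed as an $A$-module via $\phi$), and $p_2$ is still a Lie algebra morphism.

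Next I would verify the Leibniz rule. This is the same line of calculation as in Proposition \ref{prop:key}: for $(X,\delta),(X',\delta') \in \cA_A^{A'}(L',\omega')$ and $a \in A$, the bracket in the first component obeys
\[
\big[X,\phi(a)\cdot X'\big] = \phi(a)\cdot [X,X'] + \omega'(X)\big(\phi(a)\big)\cdot X' = \phi(a)\cdot [X,X'] + \phi\big(\delta(a)\big)\cdot X',
\]
where the first equality uses the Leibniz rule \eqref{eq:Leibniz} in $(L',\omega')$ and the second uses the defining pullback relation \eqref{eq:cL}. Simultaneously, in the second component, $[\delta, a\cdot \delta'] = a\cdot[\delta,\delta'] + \delta(a)\cdot\delta'$ is the standard Leibniz rule in $\Der_\K(A)$. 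Combining the two components gives exactly
\[
\big[(X,\delta),\,a\cdot (X',\delta')\big] = a\cdot\big[(X,\delta),(X',\delta')\big] + p_2\big((X,\delta)\big)(a)\cdot (X',\delta'),
\]
which is \eqref{eq:Leibniz} for $\cA_A^{A'}(L',\omega')$.

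There is no genuine obstacle here: the commutativity of $A$ and $A'$ guarantees all the $A$-module structures involved are well-defined, the pullback condition bridges the two components exactly as needed, and the computation mirrors the one carried out in Proposition \ref{prop:key} verbatim (with the associative $A'$-ring $A'$ replaced by the Lie-Rinehart algebra $L'$ over $A'$, and with commutators $[r,\phi(a)]$ replaced by the $A'$-module action $\phi(a)\cdot X$). The only mild point to keep in mind is that $L'$ enters the pullback \eqref{eq:cL} through its underlying $A'$-anchored Lie algebra, but its richer Lie-Rinehart structure over $A'$ is precisely what promotes $\cA_A^{A'}(L',\omega')$ from an $A$-anchored Lie algebra to a Lie-Rinehart algebra over $A$.
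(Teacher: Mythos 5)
Your proposal is correct and follows exactly the route the paper intends: the paper's own ``proof'' is the one-line remark that the result follows by mimicking the arguments of Lemma \ref{lem:keyAnch} and Proposition \ref{prop:key}, and you have carried out precisely that mimicry, with the component-wise action $a\cdot(X,\delta)=(\phi(a)\cdot X,\,a\cdot\delta)$, the stability check via $A'$-linearity of $\omega'$ plus the pullback relation, and the two-component Leibniz computation all as expected.
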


Proposition \ref{prop:morph} suggests the following definition.

\begin{definition}\label{def:morph}
A morphism of Lie-Rinehart algebras from $(A,L,\omega)$ to $(A',L',\omega')$ is a pair $(\phi,\Phi)$ where $\phi:A \to A'$ is a morphism of commutative $\K$-algebras and $\Phi: L \to \cA_A^{A'}(L',\omega')$ is a morphism of Lie-Rinehart algebras over $A$.
\end{definition}

\begin{remark}
If $A = A'$ and $\phi = \id$, then $L'$ itself with $p_1 = \id$ and $p_2 = \omega'$ is a pullback of $\Derk{A} = \Derk{A} \xleftarrow{\omega'} L'$. Therefore, a morphism $(\id,\Phi)$ of Lie-Rinehart algebras from $(A,L,\omega)$ to $(A,L',\omega')$ is the same as a morphism of Lie-Rinehart algebras over $A$ as in \S\ref{ssec:LRalg}.
\end{remark}

Recall that in \cite[page 61]{Huebschmann-Poisson} a morphism of Lie-Rinehart algebras from $(A,L,\omega)$ to $(A',L',\omega')$ is defined as a pair $(\phi,\psi)$ where $\phi:A \to A'$ is a morphism of $\K$-algebras and $\psi : L \to L'$ is a morphism of Lie algebras and of left $A$-modules such that for all $a \in A$, $X \in L$,
\begin{equation}\label{eq:morphism}
\phi\big(\omega(X)(a)\big) = \omega'\big(\psi(X)\big)\big(\phi(a)\big).
\end{equation}

\begin{proposition}\label{prop:LRmorph}
The datum of a morphism of Lie-Rinehart algebras as in Definition \ref{def:morph} is equivalent to the datum of a morphism in the sense of \cite[page 61]{Huebschmann-Poisson}.
\end{proposition}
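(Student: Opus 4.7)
The plan is to unwind the universal property of the pullback defining $\cA_A^{A'}(L',\omega')$ (see Proposition \ref{prop:morph}), which, by the very same argument used for $\cA_A^R$ in \eqref{eq:diagram}--\eqref{eq:cL}, realizes this space as
\[
\cA_A^{A'}(L',\omega') = \Big\{(X,\delta)\in L'\times \Derk{A}\,\big|\, \omega'(X)(\phi(a))=\phi(\delta(a)), \forall a\in A\Big\},
\]
with component-wise bracket, anchor $p_2$, and left $A$-action $a\cdot(X,\delta)=(\phi(a)\cdot X, a\cdot\delta)$ coming from the restriction of scalars along $\phi$.

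First I would start from the data $(\phi,\Phi)$ of Definition \ref{def:morph} and set $\psi\coloneqq p_1\circ\Phi: L\to L'$. Since $\Phi$ is a morphism of Lie-Rinehart algebras over $A$, the compatibility with anchors forces $p_2\circ\Phi=\omega$, so that $\Phi(X)=(\psi(X),\omega(X))$ for every $X\in L$. The defining equation of $\cA_A^{A'}(L',\omega')$ then reads $\omega'(\psi(X))(\phi(a))=\phi(\omega(X)(a))$, which is precisely \eqref{eq:morphism}. Moreover, as $p_1$ is both $\K$-linear and Lie, the fact that $\Phi$ is a Lie algebra and left $A$-module morphism (with the $A$-action on the target as above) translates into $\psi$ being a Lie algebra morphism and $\psi(a\cdot X)=\phi(a)\cdot \psi(X)$, which is Huebschmann's $A$-linearity condition.

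Conversely, given a pair $(\phi,\psi)$ as in \cite[page 61]{Huebschmann-Poisson}, I would define $\Phi: L\to L'\times \Derk{A}$ by $\Phi(X)=(\psi(X),\omega(X))$. Condition \eqref{eq:morphism} guarantees that $\Phi(X)\in \cA_A^{A'}(L',\omega')$, so $\Phi$ is well-defined. Using the component-wise nature of the bracket, anchor, and $A$-action on the target, together with the Leibniz rule \eqref{eq:Leibniz} applied to $L$ (which gives $\omega(a\cdot X)=a\cdot \omega(X)$, since $\omega$ is already $A$-linear), a straightforward check shows that $\Phi$ is a morphism of Lie-Rinehart algebras over $A$.

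Finally, I would observe that the two assignments $(\phi,\Phi)\mapsto (\phi,\psi)$ and $(\phi,\psi)\mapsto (\phi,\Phi)$ are mutually inverse, essentially by construction and the universal property of the pullback (which ensures that $\Phi$ is uniquely determined by its first component $\psi$ once the anchor condition fixes the second component to be $\omega$). No step here is genuinely hard; the only care required is in transporting the $A$-module structure across $\phi$ correctly when comparing the two notions of $A$-linearity, and in reading off that the pullback defining equation coincides on the nose with \eqref{eq:morphism}.
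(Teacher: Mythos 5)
Your proposal is correct and follows the same route as the paper: the paper's proof simply invokes the universal property of the pullback defining $\cA_A^{A'}(L',\omega')$ to identify a morphism $\Phi$ over $A$ with a Lie algebra and left $A$-module map $\psi:L\to L'$ satisfying $\phi^*\circ\omega'\circ\psi=\phi_*\circ\omega$, i.e.\ \eqref{eq:morphism}. You merely spell out the same correspondence explicitly (in particular the forced second component $\omega$ and the transported $A$-action $a\cdot X=\phi(a)\cdot X$), which is a faithful expansion of the paper's one-line argument.
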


\begin{proof}
By the universal property of the pullback, giving a morphism $\Psi : L \to \cA^{A'}_A(L',\omega')$ of Lie-Rinehart algebras over $A$ is equivalent to giving a morphism of Lie algebras and of left $A$-modules $\psi:L \to L'$ such that $\phi^*\circ \omega'\circ \psi = \phi_* \circ \omega$,
%$\big(\tilde{f}(X),\omega(X)\big) \in \cA^{A'}_A(L',\omega')$ for all $X \in L$, 
which is exactly \eqref{eq:morphism}.
\end{proof}

% --------------------------------------------------------------------------------------------------------------------------------- %
% MODULES OVER lIE-RINEHART ALGEBRAS
% --------------------------------------------------------------------------------------------------------------------------------- %

\subsection{Modules over Lie-Rinehart algebras}\label{ssec:mods}
Recall that an $(A,L,\omega)$-module in the sense of \cite[page 62]{Huebschmann-Poisson} is a left $A$-module $M$ together with a morphism of Lie algebras $\rho:L \to \End{\K}{M}$ such that
\[%\begin{equation}\label{eq:action}
\rho(a\cdot X)(m) = a\cdot \rho(X)(m) \qquad \text{and} \qquad \rho(X)(a\cdot m) = a\cdot \rho(X)(m) + \omega(X)(a)\cdot m
\]%\end{equation}
for all $X \in L$, $m \in M$, $a \in A$.

\begin{proposition}\label{prop:mods}
For a left $A$-module $M$, the datum of a left $(A,L,\omega)$-module structure as in \cite[page 62]{Huebschmann-Poisson} is equivalent to the datum of a morphism $L \to \cL_A\big(\End{\K}{M}\big)$ of Lie-Rinehart algebras over $A$.
\end{proposition}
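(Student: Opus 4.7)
My plan is to mimic the proof of Proposition \ref{prop:modsanch}, paying attention to the fact that now $A$ is commutative and $M$ carries only a left $A$-module structure. The first step is to endow $\End_\K(M)$ with a canonical $A$-ring structure. Since $A$ is commutative, left multiplication $\phi: A \to \End_\K(M)$, $a \mapsto \ell_a$ with $\ell_a(m) = a \cdot m$, is a morphism of $\K$-algebras (commutativity ensures $\phi(ab) = \ell_{ab} = \ell_a \circ \ell_b$ correctly lands in $\End_\K(M)$ rather than in a bimodule endomorphism subalgebra). Thus $\big(\End_\K(M),\phi\big)$ is an object of $\Ring_A$ and $\cL_A\big(\End_\K(M)\big)$ is a well-defined Lie-Rinehart algebra over $A$ by Theorem \ref{thm:cLfunctor}.

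Next I would invoke the universal property of the pullback diagram \eqref{eq:diagram} defining $\cL_A\big(\End_\K(M)\big)$: giving a morphism of Lie-Rinehart algebras $\varrho: L \to \cL_A\big(\End_\K(M)\big)$ is equivalent to giving a morphism of Lie algebras and of left $A$-modules $\rho: L \to \End_\K(M)$ such that $\phi^* \circ \varpi_{\End_\K(M)} \circ \rho = \phi_* \circ \omega$. Indeed, the second component $p_2 \circ \varrho$ of $\varrho$ must coincide with the anchor $\omega$ of $L$ (since $\varrho$ is a morphism of Lie-Rinehart algebras over $A$), so all data reduces to the first component $\rho \coloneqq p_1 \circ \varrho$, subject to the pullback compatibility.

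The core of the argument is then to unpack the two conditions on $\rho$ into the axioms of \cite[page 62]{Huebschmann-Poisson}. The $A$-linearity of $\rho$, namely $\rho(a \cdot X) = a \cdot \rho(X) = \ell_a \circ \rho(X)$, evaluated at $m \in M$, gives exactly the first axiom $\rho(a\cdot X)(m) = a \cdot \rho(X)(m)$. The pullback compatibility $\phi^* \circ \varpi_{\End_\K(M)} \circ \rho = \phi_* \circ \omega$ unfolds to the identity
\[
\big[\rho(X),\ell_a\big] = \ell_{\omega(X)(a)} \qquad \text{in } \End_\K(M)
\]
for every $X \in L$ and $a \in A$. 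Evaluating both sides at an arbitrary $m \in M$ yields $\rho(X)(a\cdot m) - a \cdot \rho(X)(m) = \omega(X)(a)\cdot m$, which is precisely the second (Leibniz-type) axiom.

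The only subtle point, which I would verify by a short check, is that these two conversions are genuine equivalences: any $\rho$ satisfying the Huebschmann-Poisson axioms is automatically $A$-linear and satisfies the commutator identity above (one direction), while conversely a pair $(\rho,\omega)$ fulfilling the pullback condition together with $A$-linearity recovers both axioms (the other direction). Since both translations are immediate evaluations at $m \in M$, no real obstacle is expected; the argument is essentially a dictionary between the pullback description of $\cL_A\big(\End_\K(M)\big)$ and the explicit axioms, completely parallel to the bimodule case of Proposition \ref{prop:modsanch}.
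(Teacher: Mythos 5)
Your proposal is correct and follows essentially the same route as the paper, which simply declares the proof ``completely analogous'' to Propositions \ref{prop:modsanch} and \ref{prop:LRmorph}: endow $\End{\K}{M}$ with the $A$-ring structure $a \mapsto \ell_a$ and use the universal property of the pullback to translate a Lie-Rinehart morphism $L \to \cL_A\big(\End{\K}{M}\big)$ into a Lie algebra and left $A$-linear map $\rho$ with $[\rho(X),\ell_a]=\ell_{\omega(X)(a)}$, which is exactly the Huebschmann-Poisson axioms. (A minor aside: $\ell_{ab}=\ell_a\circ\ell_b$ is just the left module axiom and does not actually require commutativity of $A$.)
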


\begin{proof}
Completely analogous to the proof of Proposition \ref{prop:modsanch} and Proposition \ref{prop:LRmorph}.
%Recall that if $M$ is a left $A$-module, then $\End{\K}{M}$ has a natural $A$-ring structure induced by left multiplication by $A$:
%\[\lambda:A \to \End{\K}{M}, \qquad a\mapsto \lambda_a,\]
%where $\lambda_a(m) \coloneqq a\cdot m$ for all $a \in A$, $m \in M$.
%By the universal property of the pullback, giving a morphism $\varrho : L \to \cL_A\big(\End{\K}{M}\big)$ of Lie-Rinehart algebras over $A$ is equivalent to giving a morphism of Lie algebras and left $A$-modules $\rho :L \to \End{\K}{M}$ such that $\lambda^* \circ \varpi_{\End{\K}{M}} \circ \rho = \lambda_* \circ \omega$,
%% $\big(\rho(X),\omega(X)\big) \in \cA^{A'}_A(L',\omega')$ for all $X \in L$, 
%which is exactly \eqref{eq:action}.
%% means exactly that
%% \[\phi\big(\omega(X)(a)\big) = \omega'\big(\psi(X)\big)\big(\phi(a)\big)\]
%% for all $a \in A$.
\end{proof}

As a consequence, Theorem \ref{thm:main2} and Proposition \ref{prop:mods} provide a conceptual proof of the well-known equivalence between the category of $(A,L,\omega)$-modules and the category of $\cU_A(L)$-modules (see \cite[page 65]{Huebschmann-Poisson}).

% --------------------------------------------------------------------------------------------------------------------------------- %
% DIFFERENTIAL OPERATORS ON A MODULE
% --------------------------------------------------------------------------------------------------------------------------------- %

\subsection{The infinitesimal gauge algebra of a module} Let $(L,\omega)$ be a Lie-Rinehart algebra over $A$ and let $M$ be an $A$-module. In \cite[page 72]{Huebschmann-Poisson}, a Lie-Rinehart algebra $\mathrm{DO}(A,L,M)$ is introduced, which acts on $M$ by the analogue of infinitesimal gauge transformations. We show how $\mathrm{DO}(A,L,M)$ can naturally be obtained via the constructions we performed and, as a consequence, how it naturally inherits a universal property as well.

The following is the analogue of Proposition \ref{prop:prod} for Lie-Rinehart algebras.

\begin{proposition}
Let $(L,\omega)$ and $(L',\omega')$ be Lie-rinehart algebras over $A$. The product of $(L,\omega)$ and $(L',\omega')$ in $\LieRin_{A}$ exists and can be computed as the pullback \eqref{eq:prod} with component-wise bracket and $A$-action and with anchor $\omega_\times \coloneqq \omega\circ q_1 = \omega'\circ q_2$.
\end{proposition}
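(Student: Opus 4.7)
The plan is to mimic the proof of Proposition \ref{prop:prod} (its anchored analogue) and check that the additional $A$-module structure behaves well. Indeed, since $(L,\omega)$ and $(L',\omega')$ are Lie-Rinehart algebras over the commutative algebra $A$, both anchors $\omega$ and $\omega'$ are $A$-linear by \eqref{eq:Leibniz} (equivalently, by the first condition of \eqref{eq:Leibniz}, $\omega(a\cdot X)=a\cdot \omega(X)$). Consequently, the pullback square \eqref{eq:prod} is actually a diagram of left $A$-modules and $A$-linear maps, and the pullback $L\times_{\Derk{A}}L'$ inherits a canonical component-wise left $A$-action making $q_1$ and $q_2$ into left $A$-linear maps.

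The first step is to observe that Proposition \ref{prop:prod} already provides the component-wise Lie bracket on $L\times_{\Derk{A}}L'$ together with the anchor $\omega_\times$, so we only need to establish the Leibniz rule \eqref{eq:Leibniz}. For $(X,X'),(Y,Y')\in L\times_{\Derk{A}}L'$ and $a\in A$ one computes, using the Leibniz rule in each component and the defining equality $\omega(X)=\omega'(X')$,
\begin{align*}
\bigl[(X,X'),a\cdot (Y,Y')\bigr] &= \bigl([X,a\cdot Y],[X',a\cdot Y']\bigr) \\
 &= \bigl(a\cdot[X,Y]+\omega(X)(a)\cdot Y,\, a\cdot[X',Y']+\omega'(X')(a)\cdot Y'\bigr) \\
 &= a\cdot \bigl[(X,X'),(Y,Y')\bigr] + \omega_\times\bigl((X,X')\bigr)(a)\cdot(Y,Y'),
\end{align*}
and analogously $\omega_\times(a\cdot (X,X')) = a\cdot \omega_\times((X,X'))$. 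Hence $\big(L\times_{\Derk{A}}L',\omega_\times\big)$ is a Lie-Rinehart algebra over $A$ and $q_1,q_2$ are morphisms in $\LieRin_A$.

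For the universal property, suppose $(K,\omega_K)$ is a Lie-Rinehart algebra over $A$ endowed with morphisms $f\colon K\to L$ and $g\colon K\to L'$ in $\LieRin_A$, meaning in particular that $\omega\circ f=\omega_K=\omega'\circ g$. By the universal property of the pullback in vector spaces there is a unique $\K$-linear map $\langle f,g\rangle\colon K\to L\times_{\Derk{A}}L'$, $z\mapsto (f(z),g(z))$. This map is automatically a morphism of Lie algebras, left $A$-linear, and compatible with the anchors since these structures are component-wise on the pullback and $f,g$ already preserve them. Uniqueness is inherited from the pullback property.

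The only delicate point, and thus the main thing to check carefully, is the Leibniz computation above, which hinges on the shared value $\omega(X)=\omega'(X')$ forcing the two derivations appearing in the component-wise Leibniz identities to agree when evaluated on $a$; beyond that, everything reduces to routine verifications component by component, exactly as in Proposition \ref{prop:prod}. We therefore omit the remaining straightforward checks.
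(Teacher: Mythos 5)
Your proof is correct and follows essentially the same route as the paper: reduce to the anchored case of Proposition \ref{prop:prod}, observe that the first condition in \eqref{eq:Leibniz} makes the pullback square a diagram of $A$-modules so the component-wise action is well defined, and verify the Leibniz rule using the defining identity $\omega(X)=\omega'(X')$ of the pullback. The extra details you supply on the universal property are precisely the ``easy check'' the paper leaves to the reader.
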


\begin{proof}
We already know that, with the structures of the statement, $L \times_{\Derk{A}} L'$ is an $A$-anchored Lie algebra and it is clearly a left $A$-module via the component-wise $A$-action. The anchor $\omega_\times$, being the composition of $A$-linear maps, is $A$-linear. We are left to check the Leibniz condition \eqref{eq:Leibniz}. Since for every $a \in A$ and $(X,Y) \in L \times_{\Derk{A}} L'$ we have $\omega(X)(a) = \omega'(Y)(a) = \omega_\times(X,Y)(a)$, the following direct computation concludes the proof:
\begin{align*}
\big[(X,Y),a\cdot(X',Y')\big] & = \big([X,a\cdot X'],[Y,a\cdot Y']\big) \\
 & \stackrel{\eqref{eq:Leibniz}}{=} \big(a\cdot [X,X'] + \omega(X)(a)\cdot X', a\cdot [Y,Y'] + \omega'(Y)(a)\cdot Y'\big) \\
 & = a\cdot \big[(X,Y),(X',Y')\big] + \omega_\times(X,Y)(a)\cdot(X',Y'). \qedhere
\end{align*}
\end{proof}

Let $(A,L,\omega)$ be a Lie-Rinehart algebra. Recall from \cite[page 72]{Huebschmann-Poisson} that for a given $A$-module $M$, the Lie-Rinehart algebra $\mathrm{DO}(A,L,M)$ of infinitesimal gauge transformations of $M$ with respect to $L$ is the subspace of $\End{\K}{M} \times L$ composed by the elements $(f,X)$ such that
\[f(a\cdot m) = \omega(X)(a)\cdot m + a\cdot f(m)\]
for all $a\in A$, $m \in M$. The bracket and the $A$-action are given component-wise, while the anchor $\tilde{\omega}$ is induced by the restriction of the projection on the second factor. 

\begin{proposition}\label{prop:DO}
Let $M$ be an $A$-module and let $(A,L,\omega)$ be a Lie-Rinehart algebra. The Lie-Rinehart algebra $\big(A,\mathrm{DO}(A,L,M),\tilde{\omega}\big)$ is the product in $\LieRin_A$ of the Lie-Rinehart algebras $(A,L,\omega)$ and $\cL_A\big(\End{\K}{M}\big)$.
\end{proposition}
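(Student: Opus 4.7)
The plan is to combine the explicit description of $\cL_A\big(\End{\K}{M}\big)$ coming from \eqref{eq:cL} and Theorem \ref{thm:cLfunctor} with the description of products in $\LieRin_A$ provided by the preceding proposition, and then observe that the redundancy built into the pullback condition reduces the product to $\mathrm{DO}(A,L,M)$.

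First, I would spell out $\cL_A\big(\End{\K}{M}\big)$. The $A$-ring structure on $\End{\K}{M}$ is given by the left multiplication map $\phi_M:A \to \End{\K}{M}$, $a \mapsto a\cdot(-)$. Since $\big[f,\phi_M(a)\big](m) = f(a\cdot m) - a\cdot f(m)$, formula \eqref{eq:cL} yields
\[\cL_A\big(\End{\K}{M}\big) = \Big\{(f,\delta) \in \End{\K}{M} \times \Derk{A} \,\big|\, f(a\cdot m) - a\cdot f(m) = \delta(a)\cdot m, \, \forall a \in A, m \in M\Big\},\]
with componentwise bracket and $A$-action, and with anchor the projection $(f,\delta)\mapsto \delta$.

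Next, by the product description just proved, the product of $(L,\omega)$ and $\cL_A\big(\End{\K}{M}\big)$ in $\LieRin_A$ is the pullback of vector spaces along $\omega$ and along the anchor $(f,\delta)\mapsto \delta$, that is, triples $(X,f,\delta)$ with $\omega(X)=\delta$ and $f(a\cdot m)-a\cdot f(m) = \delta(a)\cdot m$, endowed with componentwise bracket and $A$-action and with anchor $\omega_\times(X,f,\delta) = \omega(X)$. Eliminating the now-redundant $\delta$, I would define
\[\Psi : L \underset{\Derk{A}}{\times} \cL_A\big(\End{\K}{M}\big) \longrightarrow \mathrm{DO}(A,L,M), \qquad \big(X,f,\omega(X)\big) \longmapsto (f,X),\]
which is a bijection onto the subspace of $\End{\K}{M}\times L$ cut out by $f(a\cdot m) = \omega(X)(a)\cdot m + a\cdot f(m)$, i.e.\ exactly $\mathrm{DO}(A,L,M)$ as defined in \cite[page 72]{Huebschmann-Poisson}.

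It then remains to verify that $\Psi$ transports structures: the bracket and the $A$-action are componentwise on both sides (so $\Psi$ is trivially a Lie and $A$-module isomorphism), while the anchor on the left evaluates to $\omega(X)$, matching $\tilde{\omega}(f,X) = \omega(X)$ on the right. I do not expect any serious obstacle in this verification; the only point requiring attention is the matching of the defining equations, which follows tautologically from the fact that the relation $f(a\cdot m) - a\cdot f(m) = \omega(X)(a)\cdot m$ is precisely the translation of $\big[f,\phi_M(a)\big] = \phi_M\big(\omega(X)(a)\big)$ that defines $\cL_A\big(\End{\K}{M}\big)$ when $\delta = \omega(X)$.
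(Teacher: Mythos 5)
Your proposal is correct and follows essentially the same route as the paper: both compute the product as the pullback of $(L,\omega)$ and $\cL_A\big(\End{\K}{M}\big)$ over $\Derk{A}$ and identify it with $\mathrm{DO}(A,L,M)$ by matching the defining equation $f(a\cdot m)-a\cdot f(m)=\omega(X)(a)\cdot m$. The paper merely packages your explicit elimination of the redundant $\delta$ as a pasting of pullbacks, which is a cosmetic difference.
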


\begin{proof}
Set $E \coloneqq \End{\K}{M}$ and $\phi: A \to E, a \mapsto l_a$. By definition of $\cL_A(E)$ and by the construction of the product in $\LieRin_A$, $(A,L,\omega) \times \cL_A(E)$ is the following pasting of pullbacks:
\[
\xymatrix @R=14pt @C=17pt{
\cL_A(E) \underset{\Derk{A}}{\times} L  \ar@{}[dr]|(0.35){\Big\lrcorner} \ar[r]^-{q_1} \ar[dd]_-{q_2} & \cL_A(E)  \ar@{}[dr]|(0.35){\Big\lrcorner} \ar[dd]^-{p_2} \ar[r]^-{p_1} & E \ar[d]^-{\varpi_E} \\
 & & \Derk{E} \ar[d]^-{\phi^*} \\
L \ar[r]_-{\omega} & \Derk{A} \ar[r]_-{\phi_*} & \Derk{A,E}.
}
\]
Concretely,
$ %\[
\cL_A(E) \underset{\Derk{A}}{\times} L = \big\{(f,X) \in E \times L\mid [f,l_a] = l_{\omega(X)(a)} \text{ for all }a \in A\big\}
$ %\]
with component-wise bracket and $A$-action and with anchor given by $p_2 \circ q_1 = \omega \circ q_2 = \tilde{\omega}$.
\end{proof}

It follows from Proposition \ref{prop:mods} that $M$ is an $\big(A,\mathrm{DO}(A,L,M),\tilde{\omega}\big)$-module via $q_1$. Namely, $\varrho \coloneqq (p_1 \circ q_1) : \mathrm{DO}(A,L,M) \to \End{\K}{M}$ satisfies the conditions of \S\ref{ssec:mods}. Furthermore, $\mathrm{DO}(A,L,M)$ admits a canonical Lie-Rinehart algebra morphism $q_2 : \mathrm{DO}(A,L,M) \to L$ and, in fact, $\big(A,\mathrm{DO}(A,L,M),\tilde{\omega}\big)$ is universal with respect to these properties.

\begin{theorem}\label{thm:DO}
Let $M$ be an $A$-module and let $(A,L,\omega)$ be a Lie-Rinehart algebra. For every Lie-Rinehart algebra $(A,L',\omega')$ acting on $M$ via $\rho: L'\to \End{\K}{M}$ and any morphism of Lie-Rinehart algebras $f : L'\to L$, 
%  such that
% \[\rho(Y)(a \cdot m) = a \cdot \rho(Y)(m) + \omega'(Y)(a)\cdot m\]
% for all $a \in A$, $m \in M$, 
there exists a unique morphism $\tilde{f} : L'\to \mathrm{DO}(A,L,M)$ of Lie-Rinehart algebras over $A$ such that $\varrho \circ \tilde{f} = \rho$ and $q_2 \circ \tilde{f} = f$.
\end{theorem}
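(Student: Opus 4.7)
The plan is to present \ref{thm:DO} as a direct consequence of combining Proposition \ref{prop:mods} with the universal property of $\mathrm{DO}(A,L,M)$ as a product in $\LieRin_A$ established in Proposition \ref{prop:DO}. First I would translate the action $\rho : L' \to \End{\K}{M}$ into a morphism of Lie-Rinehart algebras over $A$. By Proposition \ref{prop:mods}, the datum of an $(A,L',\omega')$-module structure on $M$ with action $\rho$ is equivalent to the datum of a morphism $\tilde{\rho} : L' \to \cL_A\bigl(\End{\K}{M}\bigr)$ in $\LieRin_A$; by the construction in the proof of that proposition we have $\tilde{\rho}(X) = \bigl(\rho(X),\omega'(X)\bigr)$ and $p_1 \circ \tilde{\rho} = \rho$.

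Next, I would check the compatibility required to apply the universal property of the product. Since $f : L' \to L$ is a morphism of Lie-Rinehart algebras over $A$, we have $\omega \circ f = \omega'$. Combining this with $p_2 \circ \tilde{\rho} = \omega'$ gives
\[
p_2 \circ \tilde{\rho} \;=\; \omega' \;=\; \omega \circ f,
\]
which is exactly the hypothesis required to invoke the universal property of the pullback \eqref{eq:prod} describing $\mathrm{DO}(A,L,M) = \cL_A\bigl(\End{\K}{M}\bigr) \times_{\Derk{A}} L$ in $\LieRin_A$, as per Proposition \ref{prop:DO}.

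Applying that universal property, there exists a unique morphism $\tilde{f} : L' \to \mathrm{DO}(A,L,M)$ of Lie-Rinehart algebras over $A$ such that $q_1 \circ \tilde{f} = \tilde{\rho}$ and $q_2 \circ \tilde{f} = f$. Composing the first identity with $p_1$ yields
\[
\varrho \circ \tilde{f} \;=\; p_1 \circ q_1 \circ \tilde{f} \;=\; p_1 \circ \tilde{\rho} \;=\; \rho,
\]
so $\tilde{f}$ satisfies both desired identities. Uniqueness follows because any $\tilde{f}$ satisfying $\varrho \circ \tilde{f} = \rho$ and $q_2 \circ \tilde{f} = f$ automatically satisfies $q_1 \circ \tilde{f} = \tilde{\rho}$ (since the two components of $q_1 \circ \tilde{f}$, namely $\varrho \circ \tilde{f}$ and $p_2 \circ q_1 \circ \tilde{f} = \omega \circ q_2 \circ \tilde{f} = \omega \circ f = \omega'$, coincide with those of $\tilde{\rho}$), and then uniqueness from the universal property of the pullback applies. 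Since the entire argument is a chain of applications of established universal properties, there is no genuine obstacle; the only point that requires care is the bookkeeping to ensure that the $A$-linearity and Lie-algebra compatibilities propagate through each universal-property step, which is handled uniformly by Propositions \ref{prop:mods} and \ref{prop:DO}.
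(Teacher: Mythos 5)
Your proof is correct and follows exactly the route the paper intends: the paper's own proof is just the one-line remark that the theorem ``follows from Proposition \ref{prop:mods} and Proposition \ref{prop:DO}'', and your argument is a faithful, detailed expansion of that, translating $\rho$ into $\tilde{\rho}:L'\to\cL_A\bigl(\End{\K}{M}\bigr)$ and then invoking the universal property of the product $\mathrm{DO}(A,L,M)=\cL_A\bigl(\End{\K}{M}\bigr)\times_{\Derk{A}}L$. The uniqueness step, where you recover $q_1\circ\tilde{f}=\tilde{\rho}$ from the two stated identities, is exactly the bookkeeping the paper leaves implicit.
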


\begin{proof}
It follows from Proposition \ref{prop:mods} and Proposition \ref{prop:DO}.
\end{proof}

\begin{example}
Among all infinitesimal gauge algebras $\mathrm{DO}(A,L,M)$ associated with an $A$-module $M$, there exists a universal one, which is the \emph{Atiyah algebra} $\cA_M$ of $M$ (see \eg \cite[(1.1.3) Examples (c)]{Kapranov}). This is the Lie-Rinehart algebra $\cL_A(\End{\K}{M})$ of infinitesimal gauge transformations of $M$ with respect to $\Derk{A}$. Concretely,
\[\cA_M = \big\{(f,\delta) \in \End{\K}{M} \times \Derk{A} ~\big|~ f(a\cdot m) = a\cdot f(m) + \delta(a)\cdot m\big\}.\]
By Theorem \ref{thm:DO}, if $(A,L,\omega)$ is a Lie-Rinehart algebra acting on $M$ via $\rho: L\to \End{\K}{M}$, then there exists a unique morphism $\tau_L:L\to \cA_M$ of Lie-Rinehart algebras over $A$ such that $p_1 \circ \tau_L = \rho$. In particular, the datum of a left $(A,L,\omega)$-module structure on $M$ is equivalent to the datum of a morphism $L \to \cA_M$ of Lie-Rinehart algebras over $A$ (see \cite[(1.1.4) Definition]{Kapranov} and Proposition \ref{prop:mods}).
\end{example}

%\paolo{As a consequence, it is reasonable to give the following definition.
%
%\begin{definition}
%A morphism of anchored Lie algebras from $(A,L,\omega)$ to $(A',L',\omega')$ is a pair $(\phi,\Phi)$ where $\phi:A \to A'$ is a $\K$-algebra morphism and $\Phi: L \to \cA_A^{A'}(L',\omega')$ is a morphism of $A$-anchored Lie algebras.
%\end{definition}}
%
%
%\clearpage
%
%\noindent\textbf{Acknowledgements:} 

% --------------------------------------------------------------------------------------------------------------------------------- %
% BIBLIOGRAPHY
% --------------------------------------------------------------------------------------------------------------------------------- %

\end{document}